\newtheorem{theorem}[equation]{Theorem}
\newtheorem{lemma}[equation]{Lemma}
\newtheorem{proposition}[equation]{Proposition}
\newtheorem{corollary}[equation]{Corollary}
\newtheorem{definition}[equation]{Definition}
\newtheorem{theorem-n}{Theorem}
\newtheorem{claim-n}[theorem-n]{Claim}
\newtheorem{lemma-n}[theorem-n]{Lemma}
\newtheorem{proposition-n}[theorem-n]{Proposition}
\theoremstyle{definition}
\newtheorem{definition-n}[theorem-n]{Definition}
\newtheorem{example}[equation]{Example}
\theoremstyle{remark}
\newtheorem{remark}[equation]{Remark}
\newtheorem{remark-n}[theorem-n]{Remark}
\numberwithin{equation}{subsection}
\newcommand{\FF}{\mathbb{F}}
\newcommand{\TT}{\mathbb{T}}
\newcommand{\EE}{\mathbb{E}}
\newcommand{\CC}{\mathbb{C}}
\newcommand{\KK}{\mathbb{K}}
\newcommand{\bu}{\mathbf{u}}
\newcommand{\bv}{\mathbf{v}}
\DeclareMathAlphabet{\matheur}{U}{eur}{m}{n}
\newcommand{\fs}{\mathfrak{s}}
\newcommand{\ff}{\mathfrak{f}}
\newcommand{\fn}{\mathfrak{n}}
\DeclareMathOperator{\Ker}{Ker} \DeclareMathOperator{\GL}{GL}
\DeclareMathOperator{\Mat}{Mat} 
\DeclareMathOperator{\End}{End} 
\DeclareMathOperator{\Hom}{Hom} 
\DeclareMathOperator{\Id}{Id} 
\DeclareMathOperator{\Ext}{Ext}  
\DeclareMathOperator{\Li}{Li}
\DeclareMathOperator{\rank}{rank}
\newcommand{\ok}{\overline{k}}
\newcommand{\oK}{\overline{K}}
\newcommand{\sep}{\mathrm{sep}}
\newcommand{\tr}{\mathrm{tr}}
\newcommand{\laurent}[2]{{#1 (\!( #2 )\!)}}
\definecolor{ForestGreen}{rgb}{0.0, 0.5, 0.0}
\newcommand{\xequal}[2][]{\ext@arrow 0055{\equalfill@}{#1}{#2}}
\def\equalfill@{\arrowfill@\Relbar\Relbar\Relbar}
\title [Analytic continuation of Kochubei multiple polylogarithms]{Analytic continuation of Kochubei multiple polylogarithms and its applications}
\author{Yen-Tsung Chen}
\address{Department of Mathematics, Penn State University, University Park, PA 16801, USA}
\email{ytchen.math@gmail.com}
\date{\today}
\begin{document}

\begin{abstract}
    In the present paper, we propose an analytic continuation of Kochubei multiple polylogarithms by using the techniques developed in \cite{Fur22}. 
    Moreover, we produce a family of linear relations and a linear independence result for values of our analytically continued Kochubei polylogarithms at algebraic elements from a cohomological aspect.
\end{abstract}

\maketitle

\tableofcontents

\section{Introduction}
\subsection{Classical multiple polylogarithms}
    Let $n\geq 1$ be an integer. The classical $n$-th polylogarithm is the power series given by
    \[
        \Li_n(z):=\sum_{m\geq 1}\frac{z^m}{m^n}\in\mathbb{Q}\llbracket z\rrbracket.
    \]
    It defines a complex-valued function on the open unit disc $\{z\in\mathbb{C}\mid |z|<1\}$ inside the complex plane. It satisfies the differential equations
    \begin{equation}\label{Eq:PolyDE_geq2}
        z\frac{d}{dz}\Li_n(z)=\Li_{n-1}(z),~\mathrm{if}~n\geq 2,
    \end{equation}
    and
    \begin{equation}\label{Eq:PolyDE_eq1}
        (1-z)\frac{d}{dz}\Li_1(z)=1.
    \end{equation}
    By using iterated path integrals, the $n$-th polylogarithm can be analytically continued to a multi-valued meromorphic function on $\mathbb{C}$. Let $\fs=(s_1,\dots,s_r)\in\mathbb{Z}_{>0}^r$. Then the above functions can be generalized to the $\fs$-th multiple polylogarithm. More precisely, the following power series in several variables
    \[
        \Li_\fs(z_1,\dots,z_r):=\sum_{m_1>\cdots>m_r\geq 1}\frac{z_1^{m_1}\cdots z_r^{m_r}}{m_1^{s_1}\cdots m_r^{s_r}}\in\mathbb{Q}\llbracket z_1,\dots,z_r\rrbracket
    \]
    defines a complex-valued function on the open unit polydisc $\{(z_1,\dots,z_r)\in\mathbb{C}^r\mid |z_i|<1,~1\leq i\leq r\}$. It also satisfies a generalization of \eqref{Eq:PolyDE_geq2} and $\eqref{Eq:PolyDE_eq1}$ (see for example \cite[Sec.~3]{Wal02}). In particular, it can be also analytically continued to a larger region by iterated integrals (cf. \cite{Zha07}).

    The main theme of the present article is to introduce an analytic continuation of Kochubei multiple polylogarithms, a variant of multiple polylogarithms over function fields in positive characteristic, and determine its monodromy module in the sense of Furusho \cite{Fur22}.

\subsection{Main results}
    Let $\mathbb{F}_q$ be the finite field of $q$ elements where $q=p^\ell$ for some positive integer $\ell$ and prime number $p$. We denote by $A=\mathbb{F}_q[\theta]$ the polynomial ring in variable $\theta$ over $\mathbb{F}_q$ and $K=\mathbb{F}_q(\theta)$ the fraction field of $A$. The field $K$ is equipped with a normalized non-Archimedean norm $|\cdot|_\infty$ so that $|f/g|_\infty:=q^{\deg_\theta f-\deg_\theta g}$ where $f,g\in A$ with $g\neq 0$. Let $K_\infty$ be the completion of $K$ with respect to $|\cdot|_\infty$. We identify $K_\infty$ with the Laurent series field $\laurent{\FF_q}{1/\theta}$ in $1/\theta$ over $\mathbb{F}_q$. Finally, we set $\mathbb{C}_\infty$ to be the completion of a fixed algebraic closure of $K_\infty$ and set $\oK$ to be the algebraic closure of $K$ inside $\mathbb{C}_\infty$.

    In \cite{Koc05}, Kochubei introduced an analogue of the classical polylogarithms over function fields in positive characteristic using a system of difference equations. To be more specific, consider the twisted power series ring $\mathbb{C}_\infty\llbracket \tau\rrbracket$ subject to the following relation $\alpha^q\tau=\tau\alpha$ for each $\alpha\in\mathbb{C}_\infty$. We define the Carlitz difference operator on $\mathbb{C}_\infty\llbracket \tau\rrbracket$ by setting
    \begin{align*}
        \Delta:\mathbb{C}_\infty\llbracket \tau\rrbracket&\to\mathbb{C}_\infty\llbracket \tau\rrbracket\\
        F(\tau)&\mapsto(\Delta F)(\tau):=F(\tau)\theta-\theta F(\tau).
    \end{align*}
    It is straightforward to see that $\Delta(\mathbb{C}_\infty\llbracket \tau\rrbracket)\subset\mathbb{C}_\infty\llbracket \tau\rrbracket\tau$. Thus, we have a well-defined operator
    \begin{align*}
        d_C:=\tau^{-1}\Delta:\mathbb{C}_\infty\llbracket \tau\rrbracket&\to\mathbb{C}_\infty\llbracket \tau\rrbracket\\
        F(\tau)&\mapsto(d_CF)(\tau):=\tau^{-1}\big(F(\tau)\theta-\theta F(\tau)\big).
    \end{align*}
    We set $D_0:=1$ and for each $i\in\mathbb{Z}_{\geq 1}$ we define $D_i:=\prod_{j=1}^i(\theta^{q^j}-\theta)^{q^{i-j}}$. The power series $\exp_C(\tau):=\sum_{i\geq 0}\frac{1}{D_i}\tau^i\in\mathbb{C}_\infty\llbracket\tau\rrbracket$ is the Carlitz exponential series, and can be regarded as the function field analogue of the classical exponential series $\exp(z)=\sum_{m\geq 0}\frac{1}{m!}z^m\in\mathbb{C}\llbracket z\rrbracket$. Since $\exp_C(\tau)$ satisfies the relation
    \[
        (\theta+\tau)\exp_C(\tau)=\exp_C(\tau)\theta,
    \]
    we conclude immediately that
    \[
        d_C\big(\exp_C(\tau)\big)=\exp_C(\tau).
    \]
    Thus, if we make the following correspondence
    \[
        \frac{d}{dz}\leftrightarrow d_C,~z\frac{d}{z}\leftrightarrow\Delta=\tau d_C,~(1-z)\frac{d}{dz}\leftrightarrow(1-\tau)d_C,
    \]
    then \eqref{Eq:PolyDE_geq2} and \eqref{Eq:PolyDE_eq1} can be transformed to the following difference equations with indeterminate series $F_n(\tau)\in\mathbb{C}_\infty\llbracket\tau\rrbracket$ for each $n\geq 1$, namely,
    \begin{equation}\label{Eq:KPLDE_geq2}
        (\Delta F_n)(\tau)=F_{n-1}(\tau),~\mathrm{if}~n\geq 2,
    \end{equation}
    and
    \begin{equation}\label{Eq:KPLDE_eq1}
        (1-\tau)(d_CF_1)(\tau)=1.
    \end{equation}
    In fact, if we introduce $F_0(\tau):=\sum_{i\geq 1}\tau^i$, then \eqref{Eq:KPLDE_eq1} can be realized as
    \[
        (\Delta F_1)(\tau)=F_0(\tau).
    \]

    Note that for each $F(\tau)=\sum_{i\geq 0}a_i \tau^i\in\mathbb{C}_\infty\llbracket\tau\rrbracket$, we have an induced $\mathbb{F}_q$-linear power series $F(z)=\sum_{i\geq 0}a_iz^{q^i}\in\mathbb{C}_\infty\llbracket z\rrbracket$. In addition, the action of the Carlitz difference operator $\Delta$ on such a series $F(z)$ can be understood as
    \[
        (\Delta F)(z)=F(\theta z)-\theta F(z)\in\mathbb{C}_\infty\llbracket z\rrbracket.
    \]
    For $n\in\mathbb{Z}_{\geq 0}$, the $n$-th Kochubei polylogarithm, abbreviated as KPL, is given by the following $\mathbb{F}_q$-linear power series
    \begin{equation}\label{Eq:KPLs_Def}
        \Li_{\mathcal{K},n}(z):=\sum_{i\geq 1}\frac{z^{q^i}}{(\theta^{q^i}-\theta)^n}\in\mathbb{C}_\infty\llbracket z\rrbracket.
    \end{equation}
    It defines an analytic function on $\mathbb{C}_\infty$ converging at $z\in\mathbb{C}_\infty$ with $|z|_\infty<q^n$.
    One checks directly that for $n\geq 1$ we have
    \begin{equation}\label{Eq:DifferenceEquation_n>1}
        (\Delta\Li_{\mathcal{K},n})(z)=\Li_{\mathcal{K},n}(\theta z)-\theta\Li_{\mathcal{K},n}(z)=\sum_{i\geq 1}\frac{(\theta^{q^i}-\theta)z^{q^i}}{(\theta^{q^i}-\theta)^n}=\Li_{\mathcal{K},n-1}(z).
    \end{equation}
    In other words, $\Li_{\mathcal{K},n}(z)$ are solutions of \eqref{Eq:KPLDE_geq2} and \eqref{Eq:KPLDE_eq1}, and can be regarded as an analogue of the classical polylogarithms.

    Let $\fs=(s_1,\dots,s_r)\in\mathbb{Z}_{\geq 0}^r$. The $\fs$-th Kochubei multiple polylogarithm, abbreviated as KMPL, was introduced by Harada in \cite{Har22} as a generalization of KPLs. Note that the KMPLs introduced in \cite{Har22} only allowed $\fs\in\mathbb{Z}_{>0}^r$, but here we include the cases where some of $s_i$ equal to zero. This is just a convention which simplifies some of our formulation later. The $\fs$-th KMPL is given by the following power series in several variables
    \begin{equation}\label{Eq:KMPLs_Def}
        \Li_{\mathcal{K},\fs}(z_1,\dots,z_r):=\sum_{i_1>\cdots>i_r>0}\frac{z_1^{q^{i_1}}\cdots z_r^{q^{i_r}}}{(\theta^{q^{i_1}}-\theta)^{s_1}\cdots(\theta^{q^{i_r}}-\theta)^{s_r}}\in\mathbb{C}_\infty\llbracket z_1,\dots,z_r\rrbracket.
    \end{equation}
    Note that $\Li_{\mathcal{K},\fs}(z_1,\dots,z_r)$ converges on
    \begin{equation}\label{Eq:D_fs}
        \mathbb{D}_\fs:=\{(z_1,\dots,z_r)\in\mathbb{C}_\infty^r\mid|z_1/\theta^{s_1}|^{q^{i_1}}_\infty\cdots|z_1/\theta^{s_r}|^{q^{i_r}}_\infty\to 0,~\mathrm{as}~0<i_r<\cdots<i_1\to\infty\}.
    \end{equation}
    In particular, it converges on the open polydisc
    \[
        \mathbb{D}_\fs':=\{(z_1,\dots,z_r)\in\mathbb{C}_\infty^r\mid|z_i|_\infty<q^{s_i},~1\leq i\leq r\}\subset\mathbb{D}_\fs.
    \]
    Moreover, if we fix $u_2,\dots,u_r\in\mathbb{C}_\infty$ with $|u_i|_\infty<q^{s_i}$, then $\Li_{\mathcal{K},\fs}(z_1,u_2,\dots,u_r)\in\mathbb{C}_\infty\llbracket z_1\rrbracket$ is a power series in one variable $z_1$.
    One checks directly that for $s_1\geq 1$ we have
    \begin{equation}\label{Eq:DifferenceEquation_KMPLs}
        (\Delta\Li_{\mathcal{K},\fs})(z_1,u_2,\dots,u_r)=\Li_{\mathcal{K},(s_1-1,s_2,\dots,s_r)}(z_1,u_2,\dots,u_r).
    \end{equation}
    This is a higher-dimensional generalization of \eqref{Eq:DifferenceEquation_n>1} and can be treated as an analogue of the differential equations satisfied by classical multiple polylogarithms.

    Due to the lack of the theory of iterated integrals in positive characteristics, it is not immediate to see how to carry out analytic continuation for functions in our equal-characteristic setting. However, using the theory of the Artin-Schreier equations, Furusho \cite{Fur22} proposed an analytic continuation method which can be viewed as a substitute for iterated path integrals. Then Furusho established in \cite{Fur22} an analytic continuation of Carlitz multiple polylogarithms, abbreviated as CMPLs. Here CMPLs are another variants of the classical multiple polylogarithms over function fields in positive characteristic, where the Carlitz logarithm was initiated by Carlitz, then generalized to Carlitz polylogarithms by Anderson and Thakur in \cite{AT90}, and finally extended to CMPLs by Chang in \cite{Cha14}. The same technique can be also applied to construct an analytic continuation of the Drinfeld logarithms \cite{Che24}.

    The first main result in this article is an analytic continuation of KMPLs using Furusho's method. Our analytically continued KMPLs satisfy the same difference equation \eqref{Eq:DifferenceEquation_KMPLs} as the original ones. To be more specific, for an $A$-module $W$, we denote by $\Hom_{\mathbb{F}_q}(\mathbb{C}_\infty,W)$ the set of $\mathbb{F}_q$-linear functions from $\mathbb{C}_\infty$ into $W$. For our purposes, the $A$-module $W$ will be the quotients of the $A$-module $\Mat_{r\times 1}(\mathbb{C}_\infty)$ by some of its $A$-submodules, where the $A$-module structure on $\Mat_{r\times 1}(\mathbb{C}_\infty)$ is simply the scalar multiplication, namely
    \[
        a\cdot(z_1,\dots,z_r)^\tr:=(az_1,\dots,az_r)^\tr\in\Mat_{r\times 1}(\mathbb{C}_\infty).
    \]
    By abuse of the notation, the Carlitz difference operator $\Delta$ on $F\in\Hom_{\mathbb{F}_q}(\mathbb{C}_\infty,W)$ is defined by
    \[
        (\Delta F)(z):=F(\theta z)-\theta F(z).
    \]
    We establish the following analytic continuation of KMPLs, which will be restated in Theorem~\ref{Thm:KPLs} and Theorem~\ref{Thm:KMPLs} later.
    \begin{theorem}\label{Thm:Intro_I}
        Let $\fs=(s_1,\Tilde{\fs})\in\mathbb{Z}_{\geq 0}^r$ with $\Tilde{\fs}=(s_2,\dots,s_r)\in\mathbb{Z}_{\geq 0}^{r-1}$ and $\Tilde{\bu}=(u_2,\dots,u_r)\in\Mat_{1\times(r-1)}(\mathbb{C}_\infty)$. There exist a discrete $A$-submodule $\mathcal{M}_{\widetilde{\bu},\widetilde{\fs}}\subset\Mat_{r\times 1}(\mathbb{C}_\infty)$ and an $\mathbb{F}_q$-linear function 
        \[
            \overset{\rightarrow}{\Li_{\mathcal{K},\fs}}(-,u_2,\dots,u_r)\in\Hom_{\mathbb{F}_q}\big(\mathbb{C}_\infty,\Mat_{r\times 1}(\mathbb{C}_\infty)/\mathcal{M}_{\widetilde{\bu},\widetilde{\fs}}\big)
        \]
        such that the following assertions hold.
        \begin{enumerate}
            \item For $u_1\in\mathbb{C}_\infty$ with $\bu=(u_1,\dots,u_r)\in\mathbb{D}_\fs$, we have
            \[
                \overset{\rightarrow}{\Li_{\mathcal{K},\fs}}(u_1,\dots,u_r)\equiv\begin{pmatrix}
                    \Li_{\mathcal{K},s_1}(u_1)\\
                    \Li_{\mathcal{K},(s_1,s_2)}(u_1,u_2)\\
                    \vdots\\
                    \Li_{\mathcal{K},\fs}(\bu)
                \end{pmatrix}~(\mathrm{mod}~\mathcal{M}_{\widetilde{\bu},\widetilde{\fs}}).
            \]
            \item The $\mathbb{F}_q$-linear function
            \[
                \overset{\rightarrow}{\Li_{\mathcal{K},\fs}}(-,u_2,\dots,u_r):\mathbb{C}_\infty\to\Mat_{r\times 1}(\mathbb{C}_\infty)/\mathcal{M}_{\widetilde{\bu},\widetilde{\fs}}
            \]
            locally admits an analytic lift as a function on $u_1$, that is, for any point $u_1\in\mathbb{C}_\infty$, there is a closed disk $U\subset\mathbb{C}_\infty$ with center $u_1$ and an $\mathbb{F}_q$-linear lift of $\overset{\rightarrow}{\Li_{\mathcal{K},\fs}}(-,u_2,\dots,u_r)$, denoted by ${\Li_{\mathcal{K},\fs}^\circ}(-,u_2,\dots,u_r):\mathbb{C}_\infty\to\mathbb{C}_\infty$, so that ${\Li_{\mathcal{K},\fs}^\circ}(-,u_2,\dots,u_r)\mid_{U}:U\to\mathbb{C}_\infty$ is induced by a power series.
            \item For $s_1\geq 1$ and $u_1\in\mathbb{C}_\infty$, we have
            \begin{align*}
                (\Delta\overset{\rightarrow}{\Li_{\mathcal{K},\fs}})(u_1,\dots,u_r)=\overset{\rightarrow}{\Li_{\mathcal{K},(s_1-1,s_2,\dots,s_r)}}(u_1,\dots,u_r).
            \end{align*}
        \end{enumerate}
    \end{theorem}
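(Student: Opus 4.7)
The strategy is a double induction, first on the depth $r$ and, within a fixed depth, on the leading index $s_1$. For $r=1$, the base $s_1=0$ is trivial because $\Li_{\mathcal{K},0}(z)=\sum_{i\geq 1}z^{q^i}$ is already $\mathbb{F}_q$-linear and entire on $\mathbb{C}_\infty$; for $s_1\geq 1$, I would use the difference equation $\Delta\Li_{\mathcal{K},s_1}=\Li_{\mathcal{K},s_1-1}$ to extend $\Li_{\mathcal{K},s_1}$ beyond its disk of convergence $\{|z|_\infty<q^{s_1}\}$. The key analytic input is Furusho's Artin--Schreier-type technique from \cite{Fur22}: given an already continued $\mathbb{F}_q$-linear $G$ on $\mathbb{C}_\infty$, the equation $\Delta F=G$ is solved locally on each closed disk by inverting $\Delta$ on a suitable space of $\mathbb{F}_q$-linear power series, and the resulting local solutions glue into a global $\mathbb{F}_q$-linear function modulo the kernel of $\Delta$, which is governed by the Carlitz period $\widetilde{\pi}$.

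For the induction step in depth, I would fix $\widetilde{\bu}=(u_2,\dots,u_r)$ and $\widetilde{\fs}=(s_2,\dots,s_r)$ and treat $\Li_{\mathcal{K},\fs}(z_1,u_2,\dots,u_r)$ as a one-variable function in $z_1$ subject to \eqref{Eq:DifferenceEquation_KMPLs}. Packaging the partial KMPLs $\Li_{\mathcal{K},s_1}(u_1)$, $\Li_{\mathcal{K},(s_1,s_2)}(u_1,u_2),\dots,\Li_{\mathcal{K},\fs}(\bu)$ as a column vector in $\Mat_{r\times 1}(\mathbb{C}_\infty)$ makes $\Delta$ act upper-triangularly on the target: applying $\Delta$ to each coordinate produces a function already continued either at the previous value of $s_1$ (by induction on $s_1$) or at a smaller depth (by induction on $r$). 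I would then run the one-variable Artin--Schreier inversion coordinate by coordinate, and construct the module $\mathcal{M}_{\widetilde{\bu},\widetilde{\fs}}$ cumulatively by adjoining at each stage the kernel contribution required to glue the local lifts. Assertion~(1) then holds because the construction starts from, and preserves, the convergent expansion \eqref{Eq:KMPLs_Def} on $\mathbb{D}_\fs$; (2) holds because each local solution in the Artin--Schreier step is manifestly represented by an $\mathbb{F}_q$-linear power series on a closed disk; (3) holds by construction, since the new lift is chosen so that $\Delta$ maps it to the lift at $(s_1-1,s_2,\dots,s_r)$.

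The principal obstacle will be verifying that $\mathcal{M}_{\widetilde{\bu},\widetilde{\fs}}$ remains a \emph{discrete} $A$-submodule of $\Mat_{r\times 1}(\mathbb{C}_\infty)$. Each inductive step enlarges the monodromy module by a new generator concentrated in the bottom coordinate, and one must control that these new generators have sufficiently large $\infty$-adic norm relative to the previously introduced ones, so that no accumulation at a limit point can occur. This is the function-field analogue of showing that the ``quasi-periods'' attached to successively deeper KMPLs are genuinely independent of the lower-depth ones, and it is where Furusho's framework must be combined carefully with the combinatorics of the multi-index $\fs$ and the data $\widetilde{\bu}$. A secondary technical point is to ensure that $\mathcal{M}_{\widetilde{\bu},\widetilde{\fs}}$ depends only on $\widetilde{\bu}$ and $\widetilde{\fs}$, and not on the evaluation point $u_1$ in the leading variable, so that the quotient target of $\overset{\rightarrow}{\Li_{\mathcal{K},\fs}}(-,u_2,\dots,u_r)$ is genuinely a well-defined codomain for a function of $u_1\in\mathbb{C}_\infty$.
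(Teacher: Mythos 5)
Your plan has a genuine gap, and in fact two of its load-bearing claims fail. First, the base case is false: $\Li_{\mathcal{K},0}(z)=\sum_{i\geq 1}z^{q^i}$ is \emph{not} entire; it converges only on $|z|_\infty<1$, and its continuation is precisely the nontrivial Artin--Schreier step that creates the mod-$A$ ambiguity in the first place. Second, the proposed mechanism of ``inverting $\Delta$ locally on each closed disk'' does not make sense for this operator: $(\Delta F)(z)=F(\theta z)-\theta F(z)$ is a dilation-difference equation coupling the disk around $u_1$ with the disk around $\theta u_1$ (of radius multiplied by $q$), so there is no local solvability theory to invoke; moreover its kernel has nothing to do with the Carlitz period $\widetilde{\pi}$ --- that is the Carlitz/CMPL story, whereas here the monodromy for $r=1$ is $A$ itself (coming from $\ker\wp=\mathbb{F}_q[t]$ specialized at $t=\theta$) and for $r\geq 2$ it is generated by values of lower-depth KMPLs. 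The paper never inverts $\Delta$ as a functional equation in $z$; it inverts the Frobenius-difference operator $\wp(f)=f^{(-1)}-f$ on the Tate algebra $\TT$, which is genuinely surjective with kernel $\mathbb{F}_q[t]$, applied to the rational functions $(-1)^{s_i}f_i/(t-\theta)^{s_i}$ in the system \eqref{Eq:KMPLs_System} satisfied by the $t$-deformations $\mathscr{L}_{\ff,\fs}$, and only specializes at $t=\theta$ at the very end; the difference equation in assertion (3) is then recovered through the $t$-motivic operator $\bm{\delta}$, not used as the engine of the continuation.

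Because of this, two essential steps of the actual proof are absent from your outline and cannot be supplied by your induction. (i) Regularity at $t=\theta$: the right-hand sides have poles of order up to $s_i$ at $t=\theta$, so it is not automatic that the solutions can be evaluated there; the paper proves this by packaging any solution into a rigid analytic trivialization of an explicit block matrix $\Phi$ built from $(t-\theta)^{m(\fs)}$ and the $f_i$, and applying \cite[Prop.~3.1.3]{ABP04} (Proposition~\ref{Prop:ABP_P313}) to land in $\Mat(\EE)\subset\Mat(\TT_\theta)$. (ii) The monodromy module: in the paper $\mathbb{M}_{\widetilde{\bu},\widetilde{\fs}}$ is the $\mathbb{F}_q[t]$-span of solution vectors $\bm{\psi}_2,\dots,\bm{\psi}_{r+1}$ of the lower systems attached to $(\widetilde{\bu},\widetilde{\fs})$ only; well-definedness (your ``secondary technical point'', including independence of $u_1$) is automatic from the transitivity of the $\GL(\mathbb{F}_q[t])$-action on solution sets, and discreteness of $\mathcal{M}_{\widetilde{\bu},\widetilde{\fs}}=\mathbb{M}_{\widetilde{\bu},\widetilde{\fs}}\mid_{t=\theta}$ --- which you correctly identify as the principal obstacle but leave open --- follows immediately because the generators specialize to the columns of a lower-triangular matrix with $1$'s on the diagonal, hence lie in $\GL_r(\mathbb{C}_\infty)$; no norm-growth estimates on successive ``quasi-periods'' are needed. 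As it stands, your argument neither continues the base case nor produces the monodromy module, so the induction does not get off the ground.
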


    Note that in the case of KPLs ($r=1$), we have $\widetilde{\fs}=\emptyset$ and $\widetilde{\bu}=\emptyset$. The discrete $A$-submodule described in Theorem~\ref{Thm:Intro_I} is simply given by $\mathcal{M}_{\emptyset,\emptyset}=A\subset\mathbb{C}_\infty$. In other words, for any $n\in\mathbb{Z}_{\geq 0}$ our $n$-th analytically continued KPL is a $\mathbb{F}_q$-linear function
    \[
        \overset{\rightarrow}{\Li_{\mathcal{K},n}}(-):\mathbb{C}_\infty\to\mathbb{C}_\infty/A.
    \]
    For $r\geq 2$, if $\widetilde{\bu}\in\mathbb{D}_{\widetilde{\fs}}$, then the discrete $A$-submodule $\mathcal{M}_{\widetilde{\bu},\widetilde{\fs}}$ described in Theorem~\ref{Thm:Intro_I} is the $A$-linear span of the columns of the following matrix
    \[
        \begin{pmatrix}
            1 & 0 & \cdots & \cdots & 0\\
            \Li_{\mathcal{K},s_2}(u_2) & 1 & & & \vdots\\
            \Li_{\mathcal{K},(s_2,s_3)}(u_2,u_3) & \Li_{\mathcal{K},s_3}(u_3) & \ddots & & \vdots\\
            \vdots & & \ddots & & \vdots\\
            \vdots & & & 1 & 0\\
            \Li_{\mathcal{K},\widetilde{\fs}}(\widetilde{\bu}) & \cdots & \Li_{\mathcal{K},(s_{r-1},s_r)}(u_{r-1},u_r) & \Li_{\mathcal{K},s_r}(u_r) & 1
        \end{pmatrix}\in\GL_r(\mathbb{C}_\infty).
    \]
    
    The second main result of the present paper is a closer look at the values of our analytically continued KPLs, including an explicit series expression of $\overset{\rightarrow}{\Li_{\mathcal{K},n}}(u)$ for arbitrary $u\in\oK$ and the study of linear relations among KPLs at algebraic element from a cohomological point of view. To be more precise, for an integer $n\geq 1$, we consider the $\oK$-vector space $\mathcal{V}_n=\Mat_{n\times 1}(\oK)$ equipped with the $\mathbb{F}_q[t]$-module structure
    \begin{align*}
        \mathbb{F}_q[t]&\to\End_{\oK}(\mathcal{V}_n)\cong\Mat_{n\times n}(\oK)\\
        a&\mapsto [a]_n,
    \end{align*}
    that is determined by
    \[
        [t]_n=\begin{pmatrix}
                \theta & 1 & &\\
                 & \ddots & \ddots & \\
                 & & \ddots & 1\\
                 & & & \theta
            \end{pmatrix}.
    \]
    A cohomological meaning behind $\mathcal{V}_n$ will be presented in Lemma~\ref{Lem:TMod}. 
    
    Now we are ready to state the second main result of this article, which will be restated as Corollary~\ref{Cor:Series_Expression} and Theorem~\ref{Thm:Linear_Relations} later.

    \begin{theorem}\label{Thm:Intro_II}
        Let $n\in\mathbb{Z}_{>0}$ and $u_1,\dots,u_\ell\in\oK$. If we set
        \[
            \bv_1:=\begin{pmatrix}
                0\\
                \vdots\\
                0\\
                (-1)^nu_1
            \end{pmatrix},\dots,\bv_\ell:=\begin{pmatrix}
                0\\
                \vdots\\
                0\\
                (-1)^nu_\ell
            \end{pmatrix}\in\mathcal{V}_n,
        \]
        and fix an $\mathbb{F}_q$-linear lift $\Li_{\mathcal{K},n}^\circ:\mathbb{C}_\infty\to\mathbb{C}_\infty$ of $\overset{\rightarrow}{\Li_{\mathcal{K},n}}$, then the following assertions hold.
        \begin{enumerate}
            \item Let $u\in\oK$. There exist $m\in\mathbb{Z}_{>0}$ and  explicitly constructed $c\in\oK$, $a_1,\dots,a_m\in A$, as well as $f_1,\dots,f_m\in\oK$ with $|f_i|_\infty<q^n$ so that
            \[
                \overset{\rightarrow}{\Li_{\mathcal{K},n}}(u)=c+\sum_{j=1}^ma_j\overset{\rightarrow}{\Li_{\mathcal{K},n}}(f_i)=c+\sum_{j=1}^ma_j\left(\sum_{i\geq 1}\frac{f_j^{q^i}}{(\theta^{q^i}-\theta)^n}\right)\in\mathbb{C}_\infty/A.
            \]
            \item $\dim_K\mathrm{Span}_K\{1,\Li_{\mathcal{K},n}^\circ(u_1),\dots,\Li_{\mathcal{K},n}^\circ(u_\ell)\}\geq\rank_{\mathbb{F}_q[t]}\mathrm{Span}_{\mathbb{F}_q[t]}\{\bv_1,\dots,\bv_\ell\}+1$.
            \item $\rank_{\mathbb{F}_q[t]}\mathrm{Span}_{\mathbb{F}_q[t]}\{\bv_1,\dots,\bv_\ell\}+1\geq\dim_{\oK}\mathrm{Span}_{\oK}\{1,\Li_{\mathcal{K},n}^\circ(u_1),\dots,\Li_{\mathcal{K},n}^\circ(u_\ell)\}$.
        \end{enumerate}
    \end{theorem}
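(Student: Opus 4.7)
My plan is to handle the three parts via distinct techniques: (1) by a constructive descent using the difference equation of Theorem~\ref{Thm:Intro_I}(3); (3) by promoting $\mathbb{F}_q[t]$-linear relations through the $t$-module structure encoded by $[t]_n$; and (2) by Papanikolas' transcendence theorem applied to a dual $t$-motive built from $u_1,\dots,u_\ell$.

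For (1), I would lean on two identities. The first is the iterated difference equation, valid modulo $A$ for any $k\geq 1$:
\[
\overset{\rightarrow}{\Li_{\mathcal{K},n}}(u)\equiv\theta^{k}\overset{\rightarrow}{\Li_{\mathcal{K},n}}(u/\theta^{k})+\sum_{j=1}^{k}\theta^{j-1}\overset{\rightarrow}{\Li_{\mathcal{K},n-1}}(u/\theta^{j})\pmod{A}.
\]
The second is the purely formal identity $\Li_{\mathcal{K},n-r}(z)=(\Delta^{r}\Li_{\mathcal{K},n})(z)=\sum_{k=0}^{r}\binom{r}{k}(-\theta)^{r-k}\Li_{\mathcal{K},n}(\theta^{k}z)$, convergent whenever $|z|_\infty<q^{n-r}$, which expresses each lower-weight Kochubei polylogarithm as an $A$-linear combination of values of $\Li_{\mathcal{K},n}$ at points of norm $<q^{n}$. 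Given $u\in\oK$, I would apply the first identity iteratively with shifts chosen large enough that every residual Kochubei polylogarithm argument of weight $m$ eventually lies in $\{|w|_\infty<q^{m}\}$; at that stage the second identity repackages each term into $\overset{\rightarrow}{\Li_{\mathcal{K},n}}$ evaluated at $\oK$-points of norm $<q^n$, and all accumulated algebraic contributions combine modulo $A$ into the single constant $c\in\oK$.

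For (3), the Jordan-block action $[t]_n=\theta I+N$ realizes $\mathcal{V}_n$ as the Lie algebra of an Anderson $t$-module $E_n$ (essentially an $n$-th tensor power of the Carlitz module) whose exponential identifies the last coordinate of the logarithm of $\bv_i$ with $\overset{\rightarrow}{\Li_{\mathcal{K},n}}(u_i)$ up to an element of $\oK$. Given an $\mathbb{F}_q[t]$-linear relation $\sum_{i=1}^{\ell} p_i(t)\bv_i=0$ in $\mathcal{V}_n$, I would apply the matrix $[p_i(t)]_n$ on the logarithmic side and extract the last coordinate: the resulting combination $\sum_i p_i(\theta)\Li_{\mathcal{K},n}^{\circ}(u_i)$ is then forced to lie in $\oK$, yielding the required $\oK$-linear relation and hence the inequality in (3).

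For (2), I would construct a dual $t$-motive $\mathcal{M}=\mathcal{M}_{u_1,\dots,u_\ell}$ realized as an extension of the trivial motive $\mathbf{1}$ by $\ell$ copies of the $n$-th tensor $\mathbf{1}(n)$, with gluing classes determined by $u_1,\dots,u_\ell$ and rigid analytic trivialization $\Psi$ whose specialization $\Psi(\theta)$ has entries recovering $\Li_{\mathcal{K},n}^{\circ}(u_i)$. Papanikolas' theorem equates $\trdeg_{\oK}\oK(\Psi(\theta))$ with $\dim\Gamma_\mathcal{M}$, and a Hardouin/Chang--Papanikolas style analysis of the unipotent radical of $\Gamma_\mathcal{M}$ identifies this dimension with $\rank_{\mathbb{F}_q[t]}\mathrm{Span}_{\mathbb{F}_q[t]}\{\bv_i\}+1$, whence the lower bound on $\dim_K$. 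The main obstacle I anticipate is precisely in this last step: realizing $\Li_{\mathcal{K},n}^{\circ}(u_i)$ (rather than some auxiliary period) as an entry of $\Psi(\theta)$ requires matching the Furusho-style monodromy module of Theorem~\ref{Thm:Intro_I} with the period lattice of $\mathcal{M}$ on the nose, which is the cohomological content of Lemma~\ref{Lem:TMod} alluded to in the statement.
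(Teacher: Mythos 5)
Your argument for part (1) is a genuinely different and essentially workable route. The paper proves (1) (restated as Corollary~\ref{Cor:Series_Expression}) through the extension module: the class of $u$ in $\Ext^1_{\mathscr{F}}(M_{C^{\otimes n}},M_{C^{\otimes n}})\cong\mathcal{V}_n$ is contracted by $[t]_n^{-\ell}$ until it is represented by a polynomial $G_\ell$ with $\|G_\ell\|<q^n$ (Theorem~\ref{Thm:Small_Gen}), and the resulting identity $\overset{\rightarrow}{\mathscr{L}_{f,n}}=t^\ell\overset{\rightarrow}{\mathscr{L}_{g,n}}+B$ is specialized at $t=\theta$. Your two difference-equation identities are both correct and reach the same conclusion more elementarily; the one point you must make explicit is the bottom of the recursion at weight $0$, where $\overset{\rightarrow}{\Li_{\mathcal{K},0}}(w)$ is a root of the Artin--Schreier equation $X^{1/q}-X=w$ and hence an element of $\oK$ feeding into the constant $c$. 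What the paper's route buys in exchange is that the same computation drives parts (2) and (3).

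Part (3) has a genuine gap. The $\mathbb{F}_q[t]$-module $\mathcal{V}_n$ with $t$ acting by $[t]_n=\theta I+N$ carries no Frobenius part: it is the Lie algebra of the \emph{trivial} $t$-module $\mathbb{G}_{\mathrm{a}}^n$, whose exponential is the identity. There is no Anderson logarithm here whose last coordinate is $\overset{\rightarrow}{\Li_{\mathcal{K},n}}(u_i)$ --- that is the Carlitz-polylogarithm picture for $\Ext^1$ of the trivial motive by $M_{C^{\otimes n}}$, and the paper emphasizes (citing \cite[Rem.~4.2]{PR03}) that the present self-extension module is different. Concretely, extracting the last coordinate of $\sum_i[p_i(t)]_n\bv_i=0$ yields only $\sum_ip_i(\theta)u_i=0$, a relation among the $u_i$ themselves, not among the $\Li^\circ_{\mathcal{K},n}(u_i)$. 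The missing step is Lemma~\ref{Lem:Ext_Identification_1} together with Corollary~\ref{Cor:Same_Class_Relation}: vanishing of the class in $\mathcal{V}_n$ produces an explicit $B\in\oK[t]$ with $\wp(B)=\sum_ic_i(-1)^nu_i/(t-\theta)^n$, hence $\sum_ic_i\mathscr{L}^\circ_{u_i,n}+B'\in\mathbb{F}_q[t]$ for some $B'\in\oK[t]$, and specializing at $t=\theta$ gives the $\oK$-linear relation.

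Part (2) is also not yet a proof. First, the motive you propose --- an extension of the trivial motive by copies of the $n$-th Carlitz twist --- again produces Carlitz, not Kochubei, polylogarithms in $\Psi(\theta)$; the correct object is the self-extension of $M_{C^{\otimes n}}$ defined by $\Phi_{\mathcal{S}}$, whose trivialization has entries $\Omega^n$ and $\Omega^n\mathscr{L}^\circ_{u_i,n}$. Second, the unipotent-radical computation you defer is precisely the hard content, and you do not carry it out. The paper sidesteps it: from a putative $K$-linear relation it forms $\mathcal{R}$ and applies the ABP criterion \cite[Thm.~3.1.1]{ABP04} to $(\Omega^n,\Omega^n\mathcal{R})^{\mathrm{tr}}$, then clears denominators with \cite[Prop.~2.2.1]{CPY19} to conclude that some $b\cdot[N_{\mathcal{S}}]$ is trivial, i.e.\ that the relation descends to an $\mathbb{F}_q[t]$-relation among the $\bv_i$. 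If you want to pursue the Galois-group route you would in any case need this same ABP input, so the detour through Papanikolas' theorem does not save work here.
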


    We mention that if $u_1,\dots,u_\ell$ are $K$-linearly independent, then $\bv_1,\dots,\bv_\ell$ are always $\mathbb{F}_q[t]$-linearly independent in $\mathcal{V}_n$. Thus, Theorem~\ref{Thm:Intro_II}(2) can be used to detect more $K$-linearly independent sets of KPLs at algebraic elements than
    \cite[Thm.~4.9]{Har22} (cf. \cite[Rem.~4.10]{Har22}).
    We refer the reader to Example~\ref{Ex:Independence} for a concrete example. Moreover, Theorem~\ref{Thm:Intro_II}(3) says that if $\bv_1,\dots,\bv_\ell$ have non-trivial $\mathbb{F}_q[t]$-linear relations, then they can be lifted to non-trivial $\oK$-linear relations among $1,\Li_{\mathcal{K},n}^\circ(u_1),\dots,\Li_{\mathcal{K},n}^\circ(u_\ell)$. This is a supplementary result to the study of the linear independence given in \cite{Har22} as the topic of generating linear relations is not included there.

    \begin{remark}
        At the writing of this paper, R. Harada pointed out to the author that our analytically continued KPLs potentially give analytic continuation of some Thakur's hypergeometric functions \cite{Tha95} and Hasegawa's log-type hypergeometric functions \cite{Has22}, since they can be expressed by KPLs (cf. \cite[Prop~2.3]{Har22} and \cite[\S~4.3]{Has22}). We wish to further investigate this direction in a future work.
    \end{remark}
    
\subsection{Strategy and organization}
    The first part of this paper deals with Furusho's analytic continuation of KMPLs. It begins with two key ingredients: a slightly generalized version of Harada's $t$-deformation series $\mathscr{L}_{\fs,\bu}$ and a variant of Furusho's $\wp$-function on the Tate algebra $\TT$ of the closed unit disk in $\mathbb{C}_\infty$. To be more precise, for $\fs=(s_1,\dots,s_r)\in\mathbb{Z}_{\geq 0}^r$
    and $\bu\in\mathbb{D}_\fs$, we have $\mathscr{L}_{\fs,\bu}\in\TT$. Moreover, it is regular at $t=\theta$ and its specialization recovers the $\fs$-th KMPL at $\bu$. The process of Furusho's analytic continuation starts with the observation that $\mathscr{L}_{\fs,\bu}$ is a representative of an iterated inverse image of a sequence of rational functions $h_1,\dots,h_r\in K(t)\cap\TT$ under $\wp$. It allows us to define an extension $\overset{\rightarrow}{\mathscr{L}_{\fs,\bu}}$ for any choice of $\fs$ and $\bu$. However, it is subtle to evaluate $\overset{\rightarrow}{\mathscr{L}_{\fs,\bu}}$ at $t=\theta$ as $h_i$ has poles of order at most $s_i$ at $t=\theta$ for each $1\leq i\leq r$. This is the main difference compared to the prior analytic continuation results given in \cite{Fur22} and \cite{Che24}, since the constructions there deal only with the inverse image of entire functions under $\wp$. To deduce the regularity at $t=\theta$ for $\overset{\rightarrow}{\mathscr{L}_{\fs,\bu}}$, we relate $\overset{\rightarrow}{\mathscr{L}_{\fs,\bu}}$ to the solution space of some explicitly constructed Frobenius difference equation. Then the property established in \cite[Prop.~3.1.3]{ABP04} enables us to conclude the desired regularity at $t=\theta$.

    The second part of the present article investigates the linear relations among our analytically continued KPLs at algebraic elements. The content here is inspired by the results on the connection between linear relations among CPLs at algebraic points and the associated $\Ext^1$-module in the category of Frobenius modules (see \cite{Cha16},\cite{CPY19},\cite{CH21},\cite{GM22}). Note that the $\Ext^1$-module related to CPLs naturally has a $t$-module structure due to the theory of Anderson (see \cite{And86},\cite{Cha16},\cite{CPY19},\cite{Tae20},\cite{Gaz24}). In the setting of KPLs at algebraic points, a different $\Ext^1$-module is naturally attached. However, unlike the case of CPLs at algebraic points, the structure of $\Ext^1$-module in question can not be determined immediately from Anderson's theory. Fortunately, with the hint pointed out by Papanikolas and Ramachandran in \cite[Rem.~4.2]{PR03}, we can carry out an explicit description of the desired $\Ext^1$-module associated to KPLs. Then using the so-called ABP criterion \cite[Thm.~3.1.1]{ABP04}, a standard treatment given in \cite{CPY19} will lead to Theorem~\ref{Thm:Intro_II}.

    The organization of this paper is given below. In Section 2, we set up our notation and present essential properties of the hyperderivatives and Frobenius modules. In Section 3.1, we first illustrate the key ideas of Furusho's analytic continuation for KPLs. Then we extend our results to KMPLs in Section 3.2. In Section 4.1, we determine the structure of the desired $\Ext^1$-module. Finally, we use the results established in Section 4.1 to prove Theorem~\ref{Thm:Intro_II} in Section 4.2.

\section*{Acknowledgement}
    The author would like to thank H. Furusho, O. Gezmis, R. Harada, C. Namoijam, and F. Pellarin for fruitful comments and suggestions. The author was partially supported by the AMS-Simons Travel Grants and the Department of Mathematics at Penn State University. Parts of this work were done when the author visited National Tsing Hua University in January 2025. The author thanks C.-Y Chang for his hospitality.

\section{Preliminaries}
\subsection{Notation}
    \begin{longtable}{p{2.25truecm}@{\hspace{5pt}$=$\hspace{5pt}}p{11truecm}}
    $\FF_q$ & finite field with $q$ elements, where $q$ is a positive power of a prime $p$. \\
    $A$ & $\FF_q[\theta]$, the polynomial ring in $\theta$ over $\FF_q$. \\
    $K$ & $\FF_q(\theta)$, the fraction field of $A$. \\
    $|\cdot|_\infty$ & the normalized non-Archimedean norm with $|\theta|_\infty:=q$. \\
    $K_\infty$ & $\laurent{\FF_q}{1/\theta}$, the completion of $K$ with respect to $|\cdot|_\infty$. \\
    $\CC_{\infty}$ & the completion of an algebraic closure of $K_\infty$. \\
    $\oK$ & the algebraic closure of $K$ inside $\CC_{\infty}$.  \\
    $\KK$ & any algebraically closed field with $K\subset\KK\subset\mathbb{C}_\infty$.\\
    $\TT_\alpha$ & $\{\sum_{i\geq 0}a_it^i\in\mathbb{C}_\infty\llbracket t\rrbracket\mid\lim_{i\to\infty}|a_i\alpha^i|_\infty=0\}$, the Tate algebra of the closed disk in $\CC_{\infty}$ with radius $|\alpha|_\infty$. We denote by $\TT:=\TT_1$\\
    $\EE$ &  the subring of $\TT$ consisting of power series $\sum_{i\geq 0}a_it^i\in\oK\llbracket t\rrbracket$ converges everywhere on $\mathbb{C}_\infty$ and $[K_\infty(a_0,a_1,\dots):K_\infty]<\infty$.\\
    $\|\cdot \|$ & the Gauss norm on $\TT$ defined by $\|\sum_{i\geq 0}a_it^i\|:=\sup_{i\geq 0}\{|a_i|_\infty\}$ with $\sum_{i\geq 0}a_it^i\in\TT$.
    \end{longtable}

    For $n\in\mathbb{Z}$, we define the $n$-fold Frobenius twisting on the Laurent series field $\mathbb{C}_\infty(\!(t)\!)$ by setting
    \begin{align}\label{Eq:Twisting}
        \begin{split}
            \mathbb{C}_\infty(\!(t)\!)&\to\mathbb{C}_\infty(\!(t)\!)\\
        f=\sum c_it^i&\mapsto f^{(n)}:=\sum c_i^{q^n}t^i.
        \end{split}
    \end{align}
    We set $\KK[t,\sigma]:=\KK[t][\sigma]$ to be the twisted polynomial ring in variable $\sigma$ with coefficients in $\KK[t]$ subject to the relation $\sigma f=f^{(-1)}\sigma$. Finally, we extend $|\cdot|_\infty$ to matrices as follows. For $B=(B_{ij})\in\Mat_{m\times n}(\mathbb{C}_\infty)$, we define $|B|_\infty:=\max\{|B_{ij}|_\infty\}$.

\subsection{Hyperderivatives and expansions of polynomials}
    Let $f\in\KK[t]$ and $j\geq 0$ be an integer. Consider the $\KK$-linear operator ${\rm{d}}_t^j:\KK[t]\to\KK[t]$ defined by ${\rm{d}}_t^j(t^m):=\binom{m}{j}t^{m-j}$. It is called the $j$-th hyperderivative on $\KK[t]$. Given $\alpha\in\KK$, the hyperderivatives are related to the expansion of $f$ at $\alpha$. More precisely, we have
    \[
        f=\big({\rm{d}}_t^0f\big)\mid_{t=\alpha}+\big({\rm{d}}_t^1f\big)\mid_{t=\alpha}(t-\alpha)+\cdots+\big({\rm{d}}_t^nf\big)\mid_{t=\alpha}(t-\alpha)^n.
    \]
    In the present paper, we are interested in the case of $\alpha=0$ and $\alpha=\theta$. The following lemma is straightforward.
    \begin{lemma}\label{Lem:SwitchExpansion}
        Let $f\in\KK[t]$ with $\deg_t(f)=n\geq 0$. Then we have
        \[
            \begin{pmatrix}
                {\rm{d}}_t^nf\\
                {\rm{d}}_t^{n-1}f\\
                \vdots\\
                {\rm{d}}_t^1f\\
                {\rm{d}}_t^0f
            \end{pmatrix}\mid_{t=0}=\begin{pmatrix}
                1 & 0 & \cdots & 0 & 0\\
                -\theta & 1 &  &  & 0\\
                \vdots & \vdots & \ddots & & \vdots\\
                (-1)^{n-1}\theta^{n-1} & (-1)^{n-2}\binom{n-1}{1}\theta^{n-1} & & 1 & 0\\
                (-1)^n\theta^n & (-1)^{n-1}\binom{n}{1}\theta^{n} & \cdots & -\binom{n}{n-1}\theta & 1
            \end{pmatrix}\begin{pmatrix}
                {\rm{d}}_t^nf\\
                {\rm{d}}_t^{n-1}f\\
                \vdots\\
                {\rm{d}}_t^1f\\
                {\rm{d}}_t^0f
            \end{pmatrix}\mid_{t=\theta}.
        \]
    \end{lemma}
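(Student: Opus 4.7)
The plan is to use the Taylor-type expansion property that characterizes hyperderivatives: for any polynomial $g \in \KK[t]$ and any point $a \in \KK$, one has
\[
    g(t) \;=\; \sum_{j \geq 0} \bigl({\rm d}_t^j g\bigr)\big|_{t=a}\,(t-a)^j.
\]
Applying this to $f$ at $a = 0$ expresses the column vector on the left-hand side as the vector of coefficients of $f$ in the monomial basis $\{1, t, \ldots, t^n\}$, while applying it at $a = \theta$ expresses the column vector on the right-hand side as the vector of coefficients of $f$ in the shifted basis $\{1, (t-\theta), \ldots, (t-\theta)^n\}$. So the proof reduces to writing down the change-of-basis matrix from the second basis to the first.

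First I would expand each $(t-\theta)^j$ via the binomial theorem as
\[
    (t-\theta)^j \;=\; \sum_{k=0}^{j} \binom{j}{k} (-\theta)^{j-k}\, t^k,
\]
substitute this into the expansion $f(t)=\sum_{j=0}^{n}({\rm d}_t^j f)|_{t=\theta}\,(t-\theta)^j$, and regroup by powers of $t$. This yields
\[
    f(t) \;=\; \sum_{k=0}^{n} t^k \cdot \Bigl(\,\sum_{j=k}^{n} \binom{j}{k}(-\theta)^{j-k}\, ({\rm d}_t^j f)\big|_{t=\theta}\Bigr).
\]
Second, I would equate this with $f(t)=\sum_{k=0}^n ({\rm d}_t^k f)|_{t=0}\, t^k$ (the Taylor expansion at $0$) and read off, for each $0 \le k \le n$, the identity
\[
    ({\rm d}_t^k f)\big|_{t=0} \;=\; \sum_{j=k}^{n} \binom{j}{k}(-\theta)^{j-k}\,({\rm d}_t^j f)\big|_{t=\theta}.
\]
Packaging these $n+1$ scalar identities into a single matrix equation, with the rows and columns both indexed from $j=n$ at the top down to $j=0$ at the bottom, recovers the lower-triangular matrix displayed in the statement: the $(k,j)$-entry equals $\binom{j}{k}(-\theta)^{j-k}$ for $j \geq k$ and vanishes otherwise, and in particular the diagonal consists entirely of $1$'s.

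The argument is essentially routine; there is no substantive obstacle. The only real care required is bookkeeping — checking that the entry $\binom{j}{k}(-\theta)^{j-k}$ lands in exactly the position shown in the displayed matrix. As a sanity check, one can derive the inverse relation by expanding $t^m = (\theta + (t-\theta))^m$ to get $({\rm d}_t^k f)|_{t=\theta} = \sum_{j \geq k}\binom{j}{k}\theta^{j-k}({\rm d}_t^j f)|_{t=0}$; this yields the unsigned Pascal-type matrix with $\theta$ in place of $-\theta$, and one verifies that the two matrices are mutually inverse via the Vandermonde/Chu--Vandermonde identity $\sum_{j}(-1)^{j-k}\binom{j}{k}\binom{i}{j} = \delta_{i,k}$, which provides independent confirmation of the signs and binomial factors.
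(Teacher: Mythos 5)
Your proposal is correct and follows essentially the same route as the paper's own proof: expand $f$ at $0$ and at $\theta$, apply the binomial theorem to $(t-\theta)^j$, regroup by powers of $t$, and read off the coefficient identities in matrix form. (Your Chu--Vandermonde sanity check is a nice extra, and your coefficient formula even fixes a harmless typo in the paper's upper summation limit.)
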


    \begin{proof}
        On the one hand,
        \[
            f=\big({\rm{d}}_t^0f\big)\mid_{t=0}+\big({\rm{d}}_t^1f\big)\mid_{t=0}t+\cdots+\big({\rm{d}}_t^nf\big)\mid_{t=0}t^n.
        \]
        On the other hand,
        \[
            f=\big({\rm{d}}_t^0f\big)\mid_{t=\theta}+\big({\rm{d}}_t^1f\big)\mid_{t=\theta}(t-\theta)+\cdots+\big({\rm{d}}_t^nf\big)\mid_{t=\theta}(t-\theta)^n.
        \]
        By using the binomial theorem, we deduce that
        \begin{align*}
            f&=\sum_{i=0}^{n}({\rm{d}}_t^if)\mid_{t=\theta}(t-\theta)^i\\
            &=\sum_{i=0}^{n}({\rm{d}}_t^if)\mid_{t=\theta}\sum_{j=0}^i\binom{i}{j}(-1)^{i-j}\theta^{i-j}t^{j}\\
            &=\sum_{j=0}^{n-1}\bigg(\sum_{i=j}^{n-1}\binom{i}{j}(-1)^{i-j}\theta^{i-j}({\rm{d}}_t^if)\mid_{t=\theta}\bigg)t^{j}.
        \end{align*}
        In particular, for each $0\leq j\leq n$ we obtain
        \[
            ({\rm{d}}_t^jf)\mid_{t=0}=\sum_{i=j}^{n-1}\binom{i}{j}(-1)^{i-j}\theta^{i-j}({\rm{d}}_t^if)\mid_{t=\theta}.
        \]
        The matrix form of the above relations provide the desired result.
    \end{proof}

    For the convenience of later use, for each $n\geq 0$, we define
    \[
        \mathcal{D}_n:=\begin{pmatrix}
                1 & 0 & \cdots & 0 & 0\\
                -\theta & 1 &  &  & 0\\
                \vdots & \vdots & \ddots & & \vdots\\
                (-1)^{n-1}\theta^{n-1} & (-1)^{n-2}\binom{n-1}{1}\theta^{n-1} & & 1 & 0\\
                (-1)^n\theta^n & (-1)^{n-1}\binom{n}{1}\theta^{n} & \cdots & -\binom{n}{n-1}\theta & 1
            \end{pmatrix}.
    \]
    Note that $\KK[t]\subset\TT$ and for $f\in\KK[t]$ we have
    \[
        \|f\|=\max_{i=0}^n\{|{(\rm{d}}_t^if)\mid_{t=0}|_\infty\}.
    \]
    We further set
    \[
        \|f\|_\theta:=\max_{i=0}^n\{|{(\rm{d}}_t^if)\mid_{t=\theta}|_\infty\}.
    \]
    As a consequence of Lemma~\ref{Lem:SwitchExpansion}, we have the following result on different norms of $\KK[t]$.
    \begin{lemma}\label{Lem:CompareNorms}
        Let $\{f_i\}_{i=0}^\infty\subset\KK[t]$ be a sequence of polynomials with $\deg_t(f_i)=n\geq 0$. If $\lim_{i\to\infty}\|f_i\|_\theta=0$, then $\lim_{i\to\infty}\|f_i\|=0$.
    \end{lemma}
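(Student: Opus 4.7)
The plan is to apply Lemma~\ref{Lem:SwitchExpansion} directly and pass to norms. Since each $f_i$ has degree exactly $n$, Lemma~\ref{Lem:SwitchExpansion} gives the matrix identity
\[
    \begin{pmatrix}
        ({\rm{d}}_t^n f_i)\mid_{t=0}\\
        \vdots\\
        ({\rm{d}}_t^0 f_i)\mid_{t=0}
    \end{pmatrix}
    = \mathcal{D}_n \begin{pmatrix}
        ({\rm{d}}_t^n f_i)\mid_{t=\theta}\\
        \vdots\\
        ({\rm{d}}_t^0 f_i)\mid_{t=\theta}
    \end{pmatrix}
\]
for every $i$. The matrix $\mathcal{D}_n \in \Mat_{(n+1)\times(n+1)}(A)$ is fixed and independent of $i$, so in particular $|\mathcal{D}_n|_\infty$ is a finite constant.

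Next, I would take the maximum of absolute values on both sides. By the ultrametric inequality, each coordinate of the left-hand vector is bounded by $|\mathcal{D}_n|_\infty$ times the maximum coordinate of the right-hand vector, which gives
\[
    \|f_i\| = \max_{0\leq j\leq n}\bigl|({\rm{d}}_t^j f_i)\mid_{t=0}\bigr|_\infty \leq |\mathcal{D}_n|_\infty \cdot \max_{0\leq j\leq n}\bigl|({\rm{d}}_t^j f_i)\mid_{t=\theta}\bigr|_\infty = |\mathcal{D}_n|_\infty \cdot \|f_i\|_\theta.
\]
Letting $i \to \infty$ and using the hypothesis $\|f_i\|_\theta \to 0$ yields $\|f_i\| \to 0$, as desired.

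There is essentially no obstacle here: the lemma is a change-of-basis statement between expansions at $t=0$ and expansions at $t=\theta$, and the key point is simply that the transition matrix $\mathcal{D}_n$ has bounded (in fact polynomial) entries in $\theta$. The one minor thing to note is that the hypothesis $\deg_t(f_i) = n$ is used uniformly, so the same $\mathcal{D}_n$ governs all the $f_i$; if the degrees were allowed to grow, the bound $|\mathcal{D}_n|_\infty$ would blow up and the argument would fail.
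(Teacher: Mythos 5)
Your proof is correct and follows exactly the paper's argument: apply Lemma~\ref{Lem:SwitchExpansion} to get the transition matrix $\mathcal{D}_n$, bound $\|f_i\|\leq|\mathcal{D}_n|_\infty\|f_i\|_\theta$ via the ultrametric inequality, and take limits. Your remark on the uniform degree hypothesis is a nice observation but the core argument is the same.
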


    \begin{proof}
        By Lemma~\ref{Lem:SwitchExpansion}, we have
        \[
            \|f_i\|\leq|\mathcal{D}_n|_\infty\|f_i\|_\theta.
        \]
        The desired result follows immediately by taking the limit to infinity on the both sides of the above inequality.
    \end{proof}

\subsection{Frobenius modules and rigid analytic trivializations}
    In what follows, we follow \cite{CPY19} closely to recall the essential background about the Frobenius modules.
    \begin{definition}
        A Frobenius module is a $\KK[t,\sigma]$-module that is free of finite rank over $\KK[t]$. 
    \end{definition}
    Given a Frobenius module $M$ with a $\KK[t]$-basis $\{\mathbf{m}_1,\dots,\mathbf{m}_r\}\subset M$, there exists a matrix $\Phi_M\in\Mat_r(\KK[t])$ such that $\sigma\cdot(\mathbf{m}_1,\dots,\mathbf{m}_r)^\tr=\Phi_M(\mathbf{m}_1,\dots,\mathbf{m}_r)^\tr$. We call $M$ the Frobenius module defined by $\Phi_M$. Conversely, given any $\Phi\in\Mat_r(\KK[t])$, we can construct a Frobenius module defined by $\Phi$ as follows. Let $M_\Phi:=\Mat_{r\times 1}(\KK[t])$. Then we define the $\KK[\sigma]$-module structure on $M_\Phi$ by setting $\sigma\cdot(a_1,\dots,a_r):=(a_1^{(-1)},\dots,a_r^{(-1)})\Phi$ for $(a_1,\dots,a_r)\in M_\Phi$. One checks directly that $M_\Phi$ is indeed the Frobenius module defined by $\Phi$.
    
    A typical example is the $n$-th tensor powers of the Carlitz dual $t$-motive which plays a crucial role in this paper.
    \begin{example}
        For an integer $n\geq 1$, let $M_{C^{\otimes n}}=\KK[t]$ be the free $\KK[t]$-module of rank $1$ equipped with the $\KK[\sigma]$-module structure given by $\sigma\cdot f:=f^{(-1)}(t-\theta)^n$. Then $M_{C^{\otimes n}}$ is a Frobenius module defined by $(t-\theta)^n$. Moreover, $M_{C^{\otimes n}}$ is a free $\KK[\sigma]$-module of rank $n$ with a canonical choice of the $\KK[\sigma]$-basis $\{1,(t-\theta),\dots,(t-\theta)^{n-1}\}$.
    \end{example}

    A companion notion related to Frobenius modules is rigid analytic triviality.
    \begin{definition}
        We say a Frobenius module $M$ defined by $\Phi\in\Mat_r(\KK[t])$ rigid analytically trivial if there exists $\Psi\in\GL_r(\TT)$ such that
        \[
            \Psi^{(-1)}=\Phi\Psi.
        \]
        The matrix $\Psi$ is called a rigid analytic trivialization of $\Phi$.
    \end{definition}

    Let $M$ be a rigid analytically trivial Frobenius module defined by $\Phi\in\Mat_r(\KK[t])$. We set
    \[
        \mathrm{Sol}(\Phi):=\{\Psi\in\GL_r(\TT)\mid\Psi^{(-1)}=\Phi\Psi\}
    \]
    to be the collection of all rigid analytic trivializations of $\Phi$. Since
    \begin{equation}\label{Eq:SigmaInvariant}
        \GL_r(\TT)^\sigma:=\{B\in\GL_r(\TT)\mid B^{(-1)}=B\}=\GL_r(\mathbb{F}_q[t]),
    \end{equation}
    The set $\mathrm{Sol}(\Phi)$ naturally has a right action of $\GL_r(\mathbb{F}_q[t])$. Indeed, given any $\Psi\in\mathrm{Sol}(\Phi)$ and $B\in\GL_r(\mathbb{F}_q[t])$, we have
    \[
        (\Psi B)^{(-1)}=\Phi(\Psi B).
    \]
    Given $\Psi_1,\Psi_2\in\GL_r(\TT)$, one checks directly that
    \[
        (\Psi_1^{-1}\Psi_2)^{(-1)}=\Psi_1^{-1}\Phi^{-1}\Phi\Psi_2=\Psi_1^{-1}\Psi_2.
    \]
    It follows by \eqref{Eq:SigmaInvariant} that $\Psi_1^{-1}\Psi_2\in\GL_r(\mathbb{F}_q[t])$. Hence, the right $\GL_r(\mathbb{F}_q[t])$-action on $\mathrm{Sol}(\Phi)$ is transitive and we have
    \[
        \mathrm{Sol}(\Phi)=\Psi\GL_r(\mathbb{F}_q[t])
    \]
    for any $\Psi\in\mathrm{Sol}(\Phi)$.

    \begin{example}
        For an integer $n\geq 1$, the Frobenius module $M_{C^{\otimes n}}$ defined by $(t-\theta)^n$ is rigid analytic trivial. A rigid analytically trivialization of $(t-\theta)^n$ can be described explicitly by the $n$-th power of the following infinite product (cf. \cite[Prop.~3.3.6]{Pap08})
        \[
            \Omega:=(-\theta)^{\frac{-q}{q-1}}\prod_{i=1}^\infty\left(1-\frac{t}{\theta^{q^i}}\right)\in\TT^\times.
        \]
        In particular, we have
        \[
            \mathrm{Sol}\big((t-\theta)^n\big)=\Omega^n\cdot\mathbb{F}_q^\times.
        \]
    \end{example}

    We finish this subsection by stating an essential property of $\mathrm{Sol}(\Phi)$ that is an immediate consequence of \cite[Prop.~3.1.3]{ABP04}.

    \begin{proposition}\label{Prop:ABP_P313}
        Let $M$ be a rigid analytically trivial Frobenius module defined by the matrix $\Phi\in\Mat_r(\KK[t])$ with $(\det\Phi)\mid_{t=0}\neq 0$. Then
        \[
            \mathrm{Sol}(\Phi)\subset\Mat_r(\EE).
        \]
    \end{proposition}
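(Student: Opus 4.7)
The plan is to deduce the result directly from \cite[Prop.~3.1.3]{ABP04} by a column-by-column reduction. First I would fix an arbitrary $\Psi\in\mathrm{Sol}(\Phi)$ and decompose it into its columns as $\Psi=(\psi_{1}\mid\cdots\mid\psi_{r})$, where each $\psi_{j}\in\Mat_{r\times 1}(\TT)$ by the definition of $\GL_{r}(\TT)$. Since the functional equation $\Psi^{(-1)}=\Phi\Psi$ holds column-wise (Frobenius twisting and left multiplication by $\Phi$ both commute with extracting columns), each $\psi_{j}$ satisfies the same linear Frobenius difference system
\[
    \psi_{j}^{(-1)}=\Phi\,\psi_{j}.
\]

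Next, I would invoke \cite[Prop.~3.1.3]{ABP04} applied to each individual column $\psi_{j}$. The hypotheses of that proposition match those granted here: the coefficient matrix $\Phi$ lies in $\Mat_{r}(\KK[t])$ with the non-vanishing condition $(\det\Phi)\mid_{t=0}\neq 0$ at $t=0$, and the candidate solution $\psi_{j}$ has entries in the Tate algebra $\TT$. The conclusion of \cite[Prop.~3.1.3]{ABP04} is precisely that every entry of $\psi_{j}$ extends to an entire function on $\CC_{\infty}$ whose coefficients lie in $\oK$ and generate a finite extension of $K_{\infty}$, i.e.\ that $\psi_{j}\in\Mat_{r\times 1}(\EE)$. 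Reassembling the columns gives $\Psi\in\Mat_{r}(\EE)$, and since $\Psi$ was arbitrary we conclude $\mathrm{Sol}(\Phi)\subset\Mat_{r}(\EE)$.

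I expect no genuine obstacle in this argument, as the present proposition is explicitly labelled an immediate consequence of \cite[Prop.~3.1.3]{ABP04}; the only point that deserves verification is that the defining equation $\Psi^{(-1)}=\Phi\Psi$ really is applied column-wise, which is elementary. The substantive work is entirely packaged inside \cite[Prop.~3.1.3]{ABP04}, whose proof controls the Newton polygon of the coordinates of $\psi_{j}$ by iterating the twisted equation $\psi_{j}=\Phi^{(1)}\psi_{j}^{(1)}$ and exploiting the hypothesis $(\det\Phi)\mid_{t=0}\neq 0$ to propagate convergence bounds outward; I would not reproduce that argument here. It is, however, worth flagging in a remark that it is precisely the non-vanishing of $\det\Phi$ at $t=0$ (rather than at $t=\theta$, which governs the behaviour of $\Psi$ near the Carlitz period) that powers the passage from $\TT$ to $\EE$, since it ensures the iterative inversion step $\psi_{j}^{(1)}=(\Phi^{(1)})^{-1}\psi_{j}$ makes sense after Frobenius twist.
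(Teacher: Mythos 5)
Your argument is correct and is exactly what the paper intends: Proposition~\ref{Prop:ABP_P313} is stated as an immediate consequence of \cite[Prop.~3.1.3]{ABP04}, and the column-wise application of that result to each $\psi_j\in\Mat_{r\times 1}(\TT)$ satisfying $\psi_j^{(-1)}=\Phi\psi_j$ is the intended (and only) step. Nothing further is needed.
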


\section{Furusho's analytic continuation}
    The main theme in this section is to establish an analytic continuation of Kochubei polylogarithms using the techniques initiated by Furusho in \cite{Fur22}.

\subsection{Analytic continuation of Kochubei polylogarithms}
    For $n\in\mathbb{Z}_{\geq 0}$, recall that the $n$-th Kochubei polylogarithm $\Li_{\mathcal{K},n}(z)$ has been defined in \eqref{Eq:KPLs_Def}.
    It defines an analytic function on $\mathbb{C}_\infty$ convergent at $z_0\in\mathbb{C}_\infty$ with $|z_0|_\infty<q^n$ and satisfies \eqref{Eq:DifferenceEquation_n>1}.
    
    In what follows, inspired by the $t$-deformation series of $\Li_{\mathcal{K},n}(z)$ introduced by Harada in \cite{Har22}, we are able to apply Furusho's analytic continuation method to our setting. Let $n\geq 0$ be a non-negative integer and $f\in\KK[t]$ with $\|f\|<q^n$. We have
    \[
        \mathscr{L}_{f,n}:=\sum_{i\geq 1}\frac{f^{(i)}}{(\theta^{q^i}-t)^n}\in\TT.
    \]
    This is a slightly generalized version of Harada's $t$-deformation series given in \cite[Def.~2.1]{Har22}. One checks directly that
    \[
        \mathscr{L}_{f,n}^{(-1)}=\frac{(-1)^nf}{(t-\theta)^n}+\mathscr{L}_{f,n}.
    \]
    If $f=u\in\KK$ with $|u|_\infty<q^n$, then it is regular at $t=\theta$, and its specialization recovers the $n$-th Kochubei polylogarithm at $u$
    \begin{equation}\label{Eq:Ev_KPLs}
        \mathscr{L}_{f,n}\mid_{t=\theta}=\Li_{\mathcal{K},n}(u)\in\mathbb{C}_\infty.
    \end{equation}
    
    Consider a variant of Furusho's $\wp$-function
    \begin{align*}
        \wp:\TT&\to\TT\\
        f&\mapsto f^{(-1)}-f.
    \end{align*}
    It is surjective with $\Ker\wp=\mathbb{F}_q[t]$. Thus, we have the induced map $\wp^{-1}:\TT\to\TT/\mathbb{F}_q[t]$. Since $t-\theta$ is a unit in $\TT$, we have $1/(t-\theta)^n\in\TT$. It follows that given $f\in\KK[t]$, we can define
    \[
        \overset{\rightarrow}{\mathscr{L}_{f,n}}:=\wp^{-1}\big(\frac{(-1)^nf}{(t-\theta)^n}\big)\in\TT/\mathbb{F}_q[t].
    \]
    Note that if $f=u\in\KK$ with $|u|_\infty<q^n$, we have
    \begin{equation}\label{Eq:Small_Ev_KPLs}
        \overset{\rightarrow}{\mathscr{L}_{f,n}}=\mathscr{L}_{f,n}+\mathbb{F}_q[t]\in\TT_\theta/\mathbb{F}_q[t]
    \end{equation}
    which is regular at $t=\theta$. For general $f\in\KK[t]$, it is not immediate to see whether $\overset{\rightarrow}{\mathscr{L}_{f,n}}$ is regular at $t=\theta$ or not, since the rational function $(-1)^nf/(t-\theta)^n$ may have a pole of order at most $n$ at $t=\theta$. We can resolve this issue by relating elements in $\overset{\rightarrow}{\mathscr{L}_{f,n}}$ to solutions of a specific Frobenius difference equation.

    \begin{proposition}\label{Prop:Conv}
        Let $f\in\KK[t]$. Then
        \[
            \overset{\rightarrow}{\mathscr{L}_{f,n}}=\wp^{-1}\big(\frac{(-1)^nf}{(t-\theta)^n}\big)\in\TT_\theta/\mathbb{F}_q[t].
        \]
    \end{proposition}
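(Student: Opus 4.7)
The strategy is to realize a representative of the class $\overset{\rightarrow}{\mathscr{L}_{f,n}}$ as a ratio of entries of a rigid analytic trivialization of a suitable Frobenius module defined over $\KK[t]$, and then apply Proposition~\ref{Prop:ABP_P313} to upgrade convergence on the closed unit disk to convergence on all of $\CC_\infty$.

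First I would fix any lift $L\in\TT$ with $\wp(L) = (-1)^n f/(t-\theta)^n$; such $L$ exists because $\wp\colon\TT\to\TT$ is surjective, and by definition $\overset{\rightarrow}{\mathscr{L}_{f,n}} = L + \mathbb{F}_q[t] \in \TT/\mathbb{F}_q[t]$. Since any two such lifts differ by an element of $\mathbb{F}_q[t]\subset\TT_\theta$, it suffices to show that one such $L$ belongs to $\TT_\theta$.

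Next, consider the Frobenius module defined by
\[
    \Phi := \begin{pmatrix} (t-\theta)^n & 0 \\ (-1)^n f & (t-\theta)^n \end{pmatrix} \in \Mat_2(\KK[t]).
\]
Using $(\Omega^n)^{(-1)}=(t-\theta)^n\Omega^n$ together with the defining relation $L^{(-1)}-L=(-1)^n f/(t-\theta)^n$, a direct matrix computation verifies that
\[
    \Psi := \begin{pmatrix} \Omega^n & 0 \\ \Omega^n L & \Omega^n \end{pmatrix} \in \GL_2(\TT)
\]
satisfies $\Psi^{(-1)}=\Phi\Psi$, with invertibility over $\TT$ coming from $\det\Psi=\Omega^{2n}\in\TT^\times$. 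Thus $\Psi\in\mathrm{Sol}(\Phi)$. Since $(\det\Phi)\mid_{t=0}=\theta^{2n}\neq 0$, Proposition~\ref{Prop:ABP_P313} applies and yields $\Psi\in\Mat_2(\EE)$; in particular $\Omega^n L\in\EE$.

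To finish, recall that $\Omega$ vanishes exactly at $t=\theta^{q^j}$ for $j\geq 1$, each satisfying $|\theta^{q^j}|_\infty = q^{q^j}>q=|\theta|_\infty$. Therefore $\Omega^n$ is analytic and nowhere-vanishing on the closed disk of radius $|\theta|_\infty$, so $\Omega^n\in\TT_\theta^\times$. Dividing the entire function $\Omega^n L\in\EE\subset\TT_\theta$ by the unit $\Omega^n$ yields $L\in\TT_\theta$, and hence $\overset{\rightarrow}{\mathscr{L}_{f,n}} = L+\mathbb{F}_q[t]\in\TT_\theta/\mathbb{F}_q[t]$, as desired.

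The main obstacle is identifying the correct defining matrix $\Phi$. The naive choice $\bigl(\begin{smallmatrix} 1 & 0 \\ (-1)^n f/(t-\theta)^n & 1 \end{smallmatrix}\bigr)$ would put $L$ itself into $\Psi$, but its entries do not lie in $\KK[t]$, so the ABP criterion of Proposition~\ref{Prop:ABP_P313} is unavailable. Placing a Carlitz tensor-power block $(t-\theta)^n$ in the lower-right corner clears the denominator at the matrix level; the price is that the relevant trivialization entry becomes $\Omega^n L$ rather than $L$, which is exactly the shift needed for Proposition~\ref{Prop:ABP_P313} to produce entire convergence, after which $\Omega^n$ being a unit on the disk of radius $|\theta|_\infty$ restores the desired regularity for $L$ itself.
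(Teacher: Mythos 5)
Your proof is correct and follows essentially the same route as the paper: clear the denominator by forming the rank-two Frobenius module with $(t-\theta)^n$ blocks, exhibit $\bigl(\begin{smallmatrix}\Omega^n & 0\\ \Omega^n L & \Omega^n\end{smallmatrix}\bigr)$ as a rigid analytic trivialization, invoke Proposition~\ref{Prop:ABP_P313} to get $\Omega^n L\in\EE$, and divide by the unit $\Omega^n\in\TT_\theta^\times$. The only difference is that you spell out why $\Omega^n$ is a unit on the disk of radius $|\theta|_\infty$, which the paper takes for granted.
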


    \begin{proof}
        Let $\mathscr{L}_{f,n}^\circ\in\wp^{-1}\big(\frac{(-1)^nf}{(t-\theta)^n}\big)$. It is enough to show that $\mathscr{L}_{f,n}^\circ\in\TT_\theta$. One checks directly that
        \[
            \big(\Omega^n\mathscr{L}_{f,n}^\circ\big)^{(-1)}=(-1)^nf+(t-\theta)^n\big(\Omega^n\mathscr{L}_{f,n}^\circ\big).
        \]
        In particular, we have
        \[
            \begin{pmatrix}
                \Omega^n & \\
             \Omega^n\mathscr{L}_{f,n}^\circ & \Omega^n
            \end{pmatrix}^{(-1)}=\begin{pmatrix}
                (t-\theta)^n & \\
                (-1)^nf & (t-\theta)^n
            \end{pmatrix}\begin{pmatrix}
                \Omega^n & \\
                \Omega^n\mathscr{L}_{f,n}^\circ & \Omega^n
            \end{pmatrix}.
        \]
        Thus, if we set
        \[
            \Phi_{(-1)^nf}:=\begin{pmatrix}
                (t-\theta)^n & \\
                (-1)^nf & (t-\theta)^n
            \end{pmatrix}
        \]
        then we have
        \begin{align*}
            \begin{pmatrix}
            \Omega^n & \\
            \Omega^n\mathscr{L}_{f,n}^\circ & \Omega^n
            \end{pmatrix}\in\mathrm{Sol}(\Phi_{(-1)^nf}).
        \end{align*}
        Since $(\det\Phi_{(-1)^nf})\mid_{t=0}\neq 0$, it follows from Prop.~\ref{Prop:ABP_P313} that $\Omega^n\mathscr{L}_{f,n}^\circ\in\EE\subset\TT_\theta$. By using $\Omega\in\TT_\theta^\times$, we conclude
        \[
            \mathscr{L}_{f,n}^\circ\in\TT_\theta.
        \]
        The desired result now follows.
    \end{proof}

    
    It follows from Proposition~\ref{Prop:Conv} that we have the following well-defined evaluation.
    \begin{definition}
        For $f\in\KK[t]$, we define
        \[
            \overset{\rightarrow}{\Li_{\mathcal{K},n}}(f):=\overset{\rightarrow}{\mathscr{L}_{f,n}}\mid_{t=\theta}\in\mathbb{C}_\infty/A.
        \]
        In particular, since $\KK$ can be any arbitrary algebraically closed field with $K\subset\KK\subset\mathbb{C}_\infty$, we can specialize in $\KK=\mathbb{C}_\infty$ and use the natural inclusion $\mathbb{C}_\infty\subset\mathbb{C}_\infty[t]$ to define
        \begin{align*}
            \overset{\rightarrow}{\Li_{\mathcal{K},n}}(\cdot):\mathbb{C}_\infty&\to\mathbb{C}_\infty/A\\
            u&\mapsto\overset{\rightarrow}{\mathscr{L}_{u,n}}\mid_{t=\theta}
        \end{align*}
        Following \cite{Fur22}, we refer any $\mathbb{F}_q$-linear lift $\Li_{\mathcal{K},n}^\circ:\mathbb{C}_\infty\to\mathbb{C}_\infty$ as a branch of $\overset{\rightarrow}{\Li_{\mathcal{K},n}}$.
    \end{definition}
    It is natural to ask if $\overset{\rightarrow}{\Li_{\mathcal{K},n}}(\cdot):\mathbb{C}_\infty\to\mathbb{C}_\infty/A$ satisfies the same difference equations \eqref{Eq:DifferenceEquation_n>1}.
    To verify it, we construct a $t$-motivic counterpart of the Carlitz difference operator $\Delta$. For $f\in\KK[t]$ and $n\in\mathbb{Z}$ with $n\geq 1$, we define
    \[
        \bm{\delta}(\overset{\rightarrow}{\mathscr{L}_{f,n}}):=\overset{\rightarrow}{\mathscr{L}_{\theta f,n}}-t\overset{\rightarrow}{\mathscr{L}_{f,n}}.
    \]
    It is a straightforward calculation that
    \[
        \wp\bigg(\bm{\delta}(\overset{\rightarrow}{\mathscr{L}_{f,n}})\bigg)=\wp\bigg(\overset{\rightarrow}{\mathscr{L}_{\theta f,n}}-t\overset{\rightarrow}{\mathscr{L}_{f,n}}\bigg)=\frac{(-1)^n\theta f}{(t-\theta)^n}-\frac{(-1)^ntf}{(t-\theta)^n}=\frac{(-1)^{n-1}f}{(t-\theta)^{n-1}}.
    \]
    Thus, we obtain
    \begin{equation}\label{Eq:t_Motivic_CDO}
        \bm{\delta}(\overset{\rightarrow}{\mathscr{L}_{f,n}})=\overset{\rightarrow}{\mathscr{L}_{f,n-1}}
    \end{equation}
    
    Now we are ready to prove the main results for our analytically continued Kochubei polylogarithms.
    \begin{theorem}\label{Thm:KPLs}
        Let $u\in\mathbb{C}_\infty$ and $n\geq 0$ be an integer. Then the following assertions hold.
        \begin{enumerate}
            \item If $|u|_\infty<q^n$, then $\overset{\rightarrow}{\Li_{\mathcal{K},n}}(u)\equiv\Li_{\mathcal{K},n}(u)~(\mathrm{mod}~A)$.
            \item $\overset{\rightarrow}{\Li_{\mathcal{K},n}}$ locally admits an analytic lift in the sense of Theorem~\ref{Thm:Intro_I}(2).
            \item For $n\geq 1$, we have
            \[
                (\Delta\overset{\rightarrow}{\Li_{\mathcal{K},n}})(u):=\overset{\rightarrow}{\Li_{\mathcal{K},n}}(\theta u)-\theta\overset{\rightarrow}{\Li_{\mathcal{K},n}}(u)=\overset{\rightarrow}{\Li_{\mathcal{K},n-1}}(u).
            \]
        \end{enumerate}
    \end{theorem}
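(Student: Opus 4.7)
My plan is to reduce all three assertions to specializations at $t=\theta$ of already-established $t$-deformation identities, relying crucially on the regularity secured by Proposition~\ref{Prop:Conv}. Assertions~(1) and~(3) will follow directly; (2) requires the separate construction of a compatible $\mathbb{F}_q$-linear branch on $\mathbb{C}_\infty$.

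For~(1), when $|u|_\infty<q^n$, equation~\eqref{Eq:Small_Ev_KPLs} identifies $\overset{\rightarrow}{\mathscr{L}_{u,n}}$ with the class of $\mathscr{L}_{u,n}$ in $\TT_\theta/\mathbb{F}_q[t]$; specializing at $t=\theta$ and invoking~\eqref{Eq:Ev_KPLs} gives $\overset{\rightarrow}{\Li_{\mathcal{K},n}}(u)\equiv\Li_{\mathcal{K},n}(u)\pmod{A}$. For~(3), I would take $f=u\in\mathbb{C}_\infty$ in~\eqref{Eq:t_Motivic_CDO} to obtain
\[
    \overset{\rightarrow}{\mathscr{L}_{\theta u,n}}-t\cdot\overset{\rightarrow}{\mathscr{L}_{u,n}}=\overset{\rightarrow}{\mathscr{L}_{u,n-1}}\in\TT_\theta/\mathbb{F}_q[t].
\]
Both sides are regular at $t=\theta$ by Proposition~\ref{Prop:Conv}, and evaluating there recovers exactly $\overset{\rightarrow}{\Li_{\mathcal{K},n}}(\theta u)-\theta\overset{\rightarrow}{\Li_{\mathcal{K},n}}(u)=\overset{\rightarrow}{\Li_{\mathcal{K},n-1}}(u)$.

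For~(2), observe that $f\mapsto\mathscr{L}_{f,n}$ is $\mathbb{F}_q$-linear, so $\overset{\rightarrow}{\Li_{\mathcal{K},n}}\colon\mathbb{C}_\infty\to\mathbb{C}_\infty/A$ is an $\mathbb{F}_q$-linear map. Set $W:=\{v\in\mathbb{C}_\infty:|v|_\infty<q^n\}$, which is an $\mathbb{F}_q$-subspace of $\mathbb{C}_\infty$; by~(1) the convergent series $v\mapsto\Li_{\mathcal{K},n}(v)$ is an $\mathbb{F}_q$-linear lift of $\overset{\rightarrow}{\Li_{\mathcal{K},n}}|_W$. I would pick an $\mathbb{F}_q$-linear complement of $W$ in $\mathbb{C}_\infty$ and lift $\overset{\rightarrow}{\Li_{\mathcal{K},n}}$ arbitrarily on a basis of this complement, yielding a global $\mathbb{F}_q$-linear branch $\Li_{\mathcal{K},n}^\circ\colon\mathbb{C}_\infty\to\mathbb{C}_\infty$. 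For any $u_1\in\mathbb{C}_\infty$ and the closed disk $U=\{u:|u-u_1|_\infty\leq r\}$ with $r<q^n$, every $u\in U$ satisfies $u-u_1\in W$, so $\mathbb{F}_q$-linearity forces
\[
    \Li_{\mathcal{K},n}^\circ(u)=\Li_{\mathcal{K},n}^\circ(u_1)+\Li_{\mathcal{K},n}(u-u_1),
\]
exhibiting $\Li_{\mathcal{K},n}^\circ|_U$ as a constant plus a convergent power series in the local coordinate $u-u_1$.

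The genuinely hard obstacle---regularity at $t=\theta$ of arbitrary representatives of $\overset{\rightarrow}{\mathscr{L}_{f,n}}$ despite the potential pole of $f/(t-\theta)^n$---has already been absorbed into Proposition~\ref{Prop:Conv} via the Frobenius-difference-equation argument and the ABP regularity result. With that in hand, the residual subtlety in~(2) is purely set-theoretic, concerning the existence of a global $\mathbb{F}_q$-linear lift extending the canonical one on $W$; this relies on the axiom of choice but contributes no further analytic content.
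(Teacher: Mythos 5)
Your proposal is correct and follows essentially the same route as the paper: parts (1) and (3) are obtained by specializing \eqref{Eq:Ev_KPLs}, \eqref{Eq:Small_Ev_KPLs}, and \eqref{Eq:t_Motivic_CDO} at $t=\theta$, with the regularity issue already settled by Proposition~\ref{Prop:Conv}. For part (2) you merely make explicit (via the complement of the open disk $W=\{|v|_\infty<q^n\}$ and the translation identity $\Li_{\mathcal{K},n}^\circ(u)=\Li_{\mathcal{K},n}^\circ(u_1)+\Li_{\mathcal{K},n}(u-u_1)$) the lift whose existence the paper asserts in one line from additivity, convergence of the defining series on small disks, and discreteness of $A$.
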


    \begin{proof}
        The first assertion follows by \eqref{Eq:Ev_KPLs} and \eqref{Eq:Small_Ev_KPLs} immediately. For the second statement, note that $\overset{\rightarrow}{\Li_{\mathcal{K},n}}$ is additive and by the first part that
        \[
            \overset{\rightarrow}{\Li_{\mathcal{K},n}}(u)\equiv\Li_{\mathcal{K},n}(u)~(\mathrm{mod}~A)
        \]
        whenever $|u|_\infty<q^n$. The desired property now comes from the fact that $\Li_{\mathcal{K},n}$ is defined by a power series converging on the closed disk of radius smaller than $q^n$ and $A$ is discrete inside $\mathbb{C}_\infty$.

        The third assertion is a consequence of \eqref{Eq:t_Motivic_CDO}. Indeed, for $n\geq 1$, we have
        \begin{align*}
            (\Delta\overset{\rightarrow}{\Li_{\mathcal{K},n}})(u)&=\overset{\rightarrow}{\Li_{\mathcal{K},n}}(\theta u)-\theta\overset{\rightarrow}{\Li_{\mathcal{K},n}}(u)\\
            &=(\overset{\rightarrow}{\mathscr{L}_{\theta u,n}})\mid_{t=\theta}-(t\overset{\rightarrow}{\mathscr{L}_{u,n}})_{t=\theta}\\
            &=\bm{\delta}(\overset{\rightarrow}{\mathscr{L}_{u,n}})\mid_{t=\theta}\\
            &=\overset{\rightarrow}{\mathscr{L}_{u,n-1}}\mid_{t=\theta}\\
            &=\overset{\rightarrow}{\Li_{\mathcal{K},n-1}}(u)
        \end{align*}
        which gives the desired result.
    \end{proof}

\subsection{Kochubei multiple polylogarithms}
    In what follows, we aim to carry out an analytic continuation of Kochubei multiple polylogarithms, abbreviated as KMPLs, using the similar techniques developed in the previous subsection. 
    For $\fs=(s_1,\dots,s_r)\in\mathbb{Z}_{\geq 0}^r$, recall that the $\fs$-th KMPL $\Li_{\mathcal{K},\fs}(z_1,\dots,z_r)$ has been defined in \eqref{Eq:KMPLs_Def}.
    It defines an analytic function on $\Mat_{1\times r}(\mathbb{C}_\infty)$ converging at $\bu\in\mathbb{D}_\fs$ where $\mathbb{D}_\fs$ is defined in \eqref{Eq:D_fs}. In particular, it converges at $(u_1,\dots,u_r)\in\Mat_{1\times r}(\mathbb{C}_\infty)$ with $|u_i|_\infty<q^{s_i}$ for each $1\leq i\leq r$. 
    
    Similar to the case of KPLs, we slightly generalize Harada's $t$-deformation series given in \cite[Def.~3.9]{Har22} as follows. Let $\fs=(s_1,\dots,s_r)\in\mathbb{Z}_{\geq 0}^r$ and $\ff=(f_1,\dots,f_r)\in\Mat_{1\times r}(\KK[t])$ with
    \[
        \|f_1/\theta^{s_1}\|^{q^{i_1}}_\infty\cdots\|f_r/\theta^{s_r}\|^{q^{i_r}}_\infty\to 0,~\mathrm{as}~0<i_r<\cdots<i_1\to\infty.
    \] 
    Consider
    \[
        \mathscr{L}_{\ff,\fs}:=\sum_{i_1>\cdots>i_r>0}\frac{f_1^{(i_1)}\cdots f_r^{(i_r)}}{(\theta^{q^{i_1}}-t)^{s_1}\cdots(\theta^{q^{i_r}}-t)^{s_r}}\in\TT.
    \]
    It satisfies the difference equation
    \[
        \mathscr{L}_{\ff,\fs}^{(-1)}=\frac{(-1)^{s_r}f_r}{(t-\theta)^{s_r}}\mathscr{L}_{\ff[1,r-1],\fs[1,r-1]}+\mathscr{L}_{\ff,\fs},
    \]
    where for each $0\leq i,j\leq r$ we define
    \[
        (\ff[i,j],\fs[i,j]):=\begin{cases}
            \big((f_i,\dots,f_j),(s_i,\dots,s_j)\big),~&\mathrm{if}~i\leq j\\
            (\emptyset,\emptyset),~&\mathrm{if}~i>j
        \end{cases}
    \]
    and $\mathscr{L}_{\emptyset,\emptyset}:=1$.
    If $\ff=\bu\in\mathbb{D}_\fs$, then it is regular at $t=\theta$ and its specialization recovers the $\fs$-th Kochubei multiple polylogarithm at $\bu$
    \begin{equation}\label{Eq:Ev_KMPLs}
        \mathscr{L}_{\ff,\fs}\mid_{t=\theta}=\Li_{\mathcal{K},\fs}(\bu)\in\mathbb{C}_\infty.
    \end{equation}

    Note that for any $\ff=(f_1,\dots,f_r)\in\Mat_{1\times r}(\KK[t])$, the system of difference equations
    \begin{equation}\label{Eq:KMPLs_System}
        \begin{cases}
            \wp(F_1)=F_1^{(-1)}-F_1=\frac{(-1)^{s_1}f_1}{(t-\theta)^{s_1}}\\
            \wp(F_2)=F_2^{(-1)}-F_2=\frac{(-1)^{s_2}f_2}{(t-\theta)^{s_2}}F_1\\
            \vdots\\
            \wp(F_r)=F_r^{(-1)}-F_r=\frac{(-1)^{s_r}f_r}{(t-\theta)^{s_r}}F_{r-1}
        \end{cases}
    \end{equation}
    is always solvable in $\Mat_{r\times 1}(\TT)$. In particular, if $\ff=\bu\in\mathbb{D}_\fs$, then the above system admits the following solution
    \[
        \begin{pmatrix}
            F_1\\
            \vdots\\
            F_r
        \end{pmatrix}=\begin{pmatrix}
            \mathscr{L}_{\ff[1,1],\fs[1,1]}\\
            \vdots\\
            \mathscr{L}_{\ff[1,r-1],\fs[1,r-1]}\\
            \mathscr{L}_{\ff,\fs}
        \end{pmatrix}\in\Mat_{r\times 1}(\TT_\theta).
    \]
    Our next task is to determine the difference between two solutions of \eqref{Eq:KMPLs_System} and their regularity at $t=\theta$ in general case. For convenience in later use, we will call \eqref{Eq:KMPLs_System} the system of difference equations associated with the pair $(\ff,\fs)$. Moreover, we set
    \[
        \mathrm{Sol}(\ff,\fs):=\{(F_1,\dots,F_r)^\tr\in\Mat_{r\times 1}(\TT)\mid(F_1,\dots,F_r)^\tr~\mathrm{satisfies}~\eqref{Eq:KMPLs_System}\}\subset\Mat_{r\times 1}(\TT).
    \]
    For each $2\leq j\leq r$, we fix a choice of $(\psi_{jj},\dots,\psi_{jr})^\tr\in\mathrm{Sol}(\ff[j,r],\fs[j,r])\subset\Mat_{(r-j+1)\times 1}(\TT)$. Consider
    \[
        \bm{\psi}_2:=\begin{pmatrix}
            1\\
            \psi_{22}\\
            \psi_{23}\\
            \vdots\\
            \psi_{2r}
        \end{pmatrix},~\bm{\psi}_3:=\begin{pmatrix}
            0\\
            1\\
            \psi_{33}\\
            \vdots\\
            \psi_{3r}
        \end{pmatrix},\dots,~\bm{\psi}_r:=\begin{pmatrix}
            0\\
            \vdots\\
            0\\
            1\\
            \psi_{rr}
        \end{pmatrix},~\bm{\psi}_{r+1}:=\begin{pmatrix}
            0\\
            \vdots\\
            0\\
            1
        \end{pmatrix}\in\Mat_{r\times 1}(\TT)
    \]
    and
    \[
        \mathbb{M}_{\widetilde{\ff},\widetilde{\fs}}:=\mathrm{Span}_{\mathbb{F}_q[t]}\{\bm{\psi}_2,\dots,\bm{\psi}_{r+1}\}\subset\Mat_{r\times 1}(\TT),
    \]
    where $\widetilde{\ff}:=\ff[2,r]$ and $\widetilde{\fs}:=\fs[2,r]$.
    \begin{proposition}
        Let $\fs=(s_1,\dots,s_r)\in\Mat_{1\times r}(\mathbb{Z}_{\geq 0})$ and $\ff=(f_1,\dots,f_r)\in\Mat_{1\times r}(\KK[t])$. Then the following assertions hold.
        \begin{enumerate}
            \item The $\mathbb{F}_q[t]$-module $\mathbb{M}_{\widetilde{\ff},\widetilde{\fs}}$ is independent of the choice of $\{(\psi_{jj},\dots,\psi_{jr})^\tr\}_{j=2}^{r}$.
            \item We have $\mathrm{Sol}(\ff,\fs)\subset\Mat_{r\times 1}(\TT_\theta)$. Consequently, $\mathbb{M}_{\widetilde{\ff},\widetilde{\fs}}\subset\Mat_{r\times 1}(\TT_\theta)$.
            \item Let $(F_1,\dots,F_r)^\tr,(G_1,\dots,G_r)^\tr\in\mathrm{Sol}(\ff,\fs)$. Then
            \[
                (F_1,\dots,F_r)^\tr-(G_1,\dots,G_r)^\tr\in\mathbb{M}_{\widetilde{\ff},\widetilde{\fs}}.
            \]
        \end{enumerate}
    \end{proposition}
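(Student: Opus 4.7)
The plan is to establish (2) first, induct on $r$ for (3), and deduce (1) as a consequence of (3) applied to the nested sub-systems $(\ff[j,r], \fs[j,r])$.

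For (2), I would induct on $r$. The base case $r=1$ is precisely Proposition~\ref{Prop:Conv}: the set $\mathrm{Sol}(\ff,\fs)$ coincides with the coset $\wp^{-1}\bigl((-1)^{s_1}f_1/(t-\theta)^{s_1}\bigr)$, which is shown there to lie in $\TT_\theta/\mathbb{F}_q[t]$ via the $2\times 2$ Frobenius module $\Phi_{(-1)^{s_1}f_1}$ and Proposition~\ref{Prop:ABP_P313}. For the inductive step, I would package the full system \eqref{Eq:KMPLs_System} into a single lower-triangular Frobenius module of rank $r+1$, defined by a polynomial matrix $\Phi \in \Mat_{r+1}(\KK[t])$ whose diagonal is $(1, (t-\theta)^{s_1}, (t-\theta)^{s_1+s_2}, \dots, (t-\theta)^{s_1+\cdots+s_r})$---so that $(\det\Phi)|_{t=0}\neq 0$---together with a rigid analytic trivialization $\Psi \in \GL_{r+1}(\TT)$ whose first column records $(1, \Omega^{s_1}F_1, \Omega^{s_1+s_2}F_2, \dots, \Omega^{N}F_r)^\tr$, where $N := s_1+\cdots+s_r$. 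Proposition~\ref{Prop:ABP_P313} then forces $\Psi \in \Mat_{r+1}(\EE) \subset \Mat_{r+1}(\TT_\theta)$, and since $\Omega^k \in \TT_\theta^\times$ for every $k\geq 0$, one extracts $F_i \in \TT_\theta$ for all $i$. The main obstacle is the bookkeeping for the off-diagonal entries of $\Phi$: they must absorb the coupling terms $(-1)^{s_j}f_j F_{j-1}/(t-\theta)^{s_j}$ together with suitable $\Omega^*$-twists so that $\Psi^{(-1)} = \Phi\Psi$ holds while $\Phi$ remains in $\Mat_{r+1}(\KK[t])$; the overall shape mirrors the $t$-motive constructions for Carlitz multiple polylogarithms in \cite{Cha14}.

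For (3), I would induct on $r$ again. Given $(F_1,\dots,F_r)^\tr, (G_1,\dots,G_r)^\tr \in \mathrm{Sol}(\ff,\fs)$, the top equation of \eqref{Eq:KMPLs_System} yields $\wp(F_1-G_1)=0$, hence $c_1 := F_1-G_1 \in \mathbb{F}_q[t]$. Forming $\bm{d} := (F_1-G_1,\dots,F_r-G_r)^\tr - c_1\bm{\psi}_2$, a direct check using the defining equations of $\psi_{22},\dots,\psi_{2r}$ shows that the last $r-1$ coordinates of $\bm{d}$ agree with the difference of two elements of $\mathrm{Sol}(\ff[2,r],\fs[2,r])$. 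Choosing the solutions $(\psi_{jj},\dots,\psi_{jr})^\tr$ for $j\geq 3$ coherently with the smaller sub-system (so that the basis for that system, prepended with a leading zero, matches $\bm{\psi}_3,\dots,\bm{\psi}_{r+1}$), the inductive hypothesis places $\bm{d}$ in $\Span_{\mathbb{F}_q[t]}\{\bm{\psi}_3,\dots,\bm{\psi}_{r+1}\}$, and hence $(F_1-G_1,\dots,F_r-G_r)^\tr \in \mathbb{M}_{\widetilde{\ff},\widetilde{\fs}}$.

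For (1), let $\{\bm{\psi}_j\}_{j=2}^{r+1}$ and $\{\bm{\psi}'_j\}_{j=2}^{r+1}$ be two basis systems built from distinct choices of solutions. Applying (3) to each sub-system $\mathrm{Sol}(\ff[j,r],\fs[j,r])$ and padding with $j-1$ leading zeros yields $\bm{\psi}'_j - \bm{\psi}_j \in \Span_{\mathbb{F}_q[t]}\{\bm{\psi}_{j+1},\dots,\bm{\psi}_{r+1}\}$ for each $j$. A reverse induction on $j$, starting from the trivial equality $\bm{\psi}'_{r+1} = \bm{\psi}_{r+1}$, then gives $\bm{\psi}'_j \in \Span_{\mathbb{F}_q[t]}\{\bm{\psi}_2,\dots,\bm{\psi}_{r+1}\}$ for every $j$, and the reverse containment follows by symmetry.
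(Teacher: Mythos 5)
The essential difficulty in this proposition is part (2), and your construction there has a genuine gap. You propose a Frobenius module with diagonal $\bigl(1,(t-\theta)^{s_1},(t-\theta)^{s_1+s_2},\dots,(t-\theta)^{N}\bigr)$ and a trivialization whose first column is $\bigl(1,\Omega^{s_1}F_1,\Omega^{s_1+s_2}F_2,\dots,\Omega^{N}F_r\bigr)^{\tr}$. Writing $e_j:=s_1+\cdots+s_j$ and twisting, the system \eqref{Eq:KMPLs_System} gives
\[
\bigl(\Omega^{e_j}F_j\bigr)^{(-1)}=(t-\theta)^{e_j}\,\Omega^{e_j}F_j+(-1)^{s_j}f_j\,(t-\theta)^{e_{j-1}}\,\Omega^{s_j}\cdot\Omega^{e_{j-1}}F_{j-1},
\]
so the subdiagonal entry of $\Phi$ is forced to be $(-1)^{s_j}f_j(t-\theta)^{e_{j-1}}\Omega^{s_j}$, which is not in $\KK[t]$ whenever $s_j\geq 1$ (already the $(2,1)$ entry would be $(-1)^{s_1}f_1\Omega^{s_1}$). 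This is not a bookkeeping issue that "suitable $\Omega^{*}$-twists" can absorb: for $\Phi$ to be polynomial the ratio of the $\Omega$-powers attached to consecutive rows must lie in $\CC_\infty(t)$, which forces all the exponents to be \emph{equal}, and then they must each dominate every $s_j$. The CMPL normalization of \cite{Cha14} does not transfer because there the coupling coefficients carry a zero at $t=\theta$ coming from the deformation series itself, whereas here they carry a pole of order $s_j$. The correct fix is the paper's: multiply the entire unipotent solution matrix by the single power $\Omega^{m(\fs)}$ with $m(\fs)=\max_i s_i$, so that $\Phi$ has constant diagonal $(t-\theta)^{m(\fs)}$ and polynomial subdiagonal entries $(-1)^{s_j}f_j(t-\theta)^{m(\fs)-s_j}$; a single application of Proposition~\ref{Prop:ABP_P313} then handles all the $F_i$ at once, with no induction on $r$ needed.

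Your arguments for (1) and (3) are correct in outline and genuinely different from the paper's: you peel off one coordinate at a time using $\wp(F_1-G_1)=0$ and induct, whereas the paper packages both solutions into $(r+1)\times(r+1)$ trivializations of the same $\Phi$ and invokes transitivity of the right $\GL_{r+1}(\mathbb{F}_q[t])$-action together with the triangular shape of the transition matrix. Your route is more elementary, but it does require you to set up the induction with a fixed coherent system of choices of the $(\psi_{jj},\dots,\psi_{jr})^{\tr}$ across all the nested sub-systems before deducing (1), since as written (3) quietly uses the independence statement (1) for the smaller systems; this is repairable but should be made explicit to avoid circularity. Note also that your verification in (3) shows $(d_2,\dots,d_r)$ solves the homogeneous version of the sub-system; to invoke the inductive hypothesis you should add one sentence observing that any solution of the homogeneous system is a difference of two elements of $\mathrm{Sol}(\ff[2,r],\fs[2,r])$ (add it to any fixed solution, which exists by surjectivity of $\wp$ on $\TT$).
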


    \begin{proof}
        Let $m(\fs):=\max_{1\leq i\leq r}\{s_i\}$ and
        \[
            \Phi':=(t-\theta)^{m(\fs)}
            \begin{pmatrix}
                1 & 0 & \cdots & 0 \\
                \frac{(-1)f_2}{(t-\theta)^{s_2}} & \ddots & & \vdots\\
                 & \ddots & 1 & 0\\
                 & & \frac{(-1)f_r}{(t-\theta)^{s_r}} & 1
            \end{pmatrix}\in\Mat_{r}(\KK[t]).
        \]
        Then
        \[
            \Psi':=\Omega^{m(\fs)}\bigg(\bm{\psi}_2,\dots,\bm{\psi}_{r+1}\bigg)
        \]
        satisfies the difference equation $\Psi'^{(-1)}=\Phi'\Psi'$, and thus $\Psi'\in\mathrm{Sol}(\Phi')$.
        For each $2\leq j\leq r$, if we have another choice of $(\widetilde{\psi}_{jj},\dots,\tilde{\psi}_{jr})^\tr\in\mathrm{Sol}(\ff[j,r],\fs[j,r])$, then we can similarly consider
        \[
            \bm{\tilde{\psi}}_2:=\begin{pmatrix}
            1\\
            \tilde{\psi}_{22}\\
            \tilde{\psi}_{23}\\
            \vdots\\
            \tilde{\psi}_{2r}
        \end{pmatrix},~\bm{\tilde{\psi}}_3:=\begin{pmatrix}
            0\\
            1\\
            \tilde{\psi}_{33}\\
            \vdots\\
            \tilde{\psi}_{3r}
        \end{pmatrix},\dots,~\bm{\tilde{\psi}}_r:=\begin{pmatrix}
            0\\
            \vdots\\
            0\\
            1\\
            \tilde{\psi}_{rr}
        \end{pmatrix},~\bm{\tilde{\psi}}_{r+1}:=\begin{pmatrix}
            0\\
            \vdots\\
            0\\
            1
        \end{pmatrix}\in\Mat_{r\times 1}(\TT).
        \]
        It follows that
        \[
            \tilde{\Psi}':=\Omega^{m(\fs)}\bigg(\bm{\tilde{\psi}}_2,\dots,\bm{\tilde{\psi}}_{r+1}\bigg)
        \]
        satisfies the same difference equation $\tilde{\Psi}'^{(-1)}=\Phi'\tilde{\Psi}'$, and thus $\tilde{\Psi}'\in\mathrm{Sol}(\Phi')$. Since the right $\GL_r(\mathbb{F}_q[t])$-action on $\mathrm{Sol}(\Phi')$ is transitive, there is $C'\in\GL_r(\mathbb{F}_q[t])$ so that $\tilde{\Psi}'=\Psi'C'$. Hence $\mathbb{M}_{\widetilde{\ff},\widetilde{\fs}}$ is independent of the choice of $\{(\psi_{jj},\dots,\psi_{jr})^\tr\}_{j=2}^{r}$.
    
        To prove the second assertion on the regularity at $t=\theta$, let
        \[
            \Phi:=\begin{pmatrix}
                (t-\theta)^{m(\fs)} & 0 \\
                (-1)^{s_1}f_1{(t-\theta)^{m(\fs)-s_1}} & \Phi'
            \end{pmatrix}\in\Mat_{r+1}(\KK[t]).
        \]
        For each $(F_1,\dots,F_r)^\tr\in\mathrm{Sol}(\ff,\fs)$, if we set
        \[
            \bm{\psi}_1:=(F_1,\dots,F_r)^\tr,
        \]
        then
        \[
            \Psi:=\begin{pmatrix}
                \Omega^{m(\fs)} & 0\\
                \Omega^{m(\fs)}\bm{\psi}_1 & \Psi'
            \end{pmatrix}\in\GL_{r+1}(\TT)
        \]
        satisfies $\Psi^{(-1)}=\Phi\Psi$. In particular, $\Psi\in\mathrm{Sol}(\Phi)$. Since $(\det\Phi)\mid_{t=0}\neq 0$, it follows from Prop.~\ref{Prop:ABP_P313} that $\Psi\in\Mat_{r+1}(\EE)$. Since $\Omega\in\TT_\theta^\times$, we conclude that $(F_1,\dots,F_r)^\tr\in\Mat_{r\times 1}(\TT_\theta)$. The second assertion follows immediately.

        Finally, consider $(G_1,\dots,G_r)^\tr\in\mathrm{Sol}(\ff,\fs)$ and set similarly that
        \[
            \bm{\tilde{\psi}}_1:=(G_1,\dots,G_r)^\tr.
        \]
        Then
        \[
            \tilde{\Psi}:=\begin{pmatrix}
                \Omega^{m(\fs)} & 0\\
                \Omega^{m(\fs)}\bm{\tilde{\psi}}_1 & \Psi'
            \end{pmatrix}\in\GL_{r+1}(\TT)
        \]
        satisfies the same difference equation $\tilde{\Psi}^{(-1)}=\Phi\tilde{\Psi}$, and thus $\tilde{\Psi}\in\mathrm{Sol}(\Phi)$. Since the right $\GL_{r+1}(\mathbb{F}_q[t])$-action on $\mathrm{Sol}(\Phi)$ is transitive, there is $C\in\GL_{r+1}(\mathbb{F}_q[t])$ so that $\tilde{\Psi}=\Psi C$. Since $\Psi$ and $\tilde{\Psi}$ are both lower triangular with the same diagonal entries $\Omega^{m(\fs)}$, the matrix $C$ must be of the form
        \[
            C=\begin{pmatrix}
                1 & & & \\
                \star & 1 & & \\
                \vdots & & \ddots & \\
                \star & \cdots & \star & 1
            \end{pmatrix}\in\GL_{r+1}(\mathbb{F}_q[t]).
        \]
        Hence, the desired result follows immediately by comparing the first column of the equation $\tilde{\Psi}=\Psi C$.
    \end{proof}

    It follows that for $\fs\in\Mat_{1\times r}(\mathbb{Z}_{\geq 0})$ and $(f_2,\dots,f_r)\in\Mat_{1\times (r-1)}(\KK[t])$ we can define
    \begin{align*}
        \overset{\rightarrow}{\mathscr{L}_{(-.f_2,\dots,f_r),\fs}}:\KK[t]&\to\Mat_{r\times 1}(\TT_\theta)/\mathbb{M}_{\widetilde{\ff},\widetilde{\fs}}\\
        f_1&\mapsto(F_1,\dots,F_r)^\tr,
    \end{align*}
    where $(F_1,\dots,F_r)^\tr$ is any element in $\mathrm{Sol}(\ff,\fs)$ with $\ff=(f_1,\dots,f_r)$. Note that whenever $\ff=\bu\in\mathbb{D}_\fs$, we have
    \begin{equation}\label{Eq:Small_Ev_KMPLs}
        \overset{\rightarrow}{\mathscr{L}_{\ff,\fs}}=\begin{pmatrix}
            \mathscr{L}_{\ff[1,1],\fs[1,1]}\\
            \vdots\\
            \mathscr{L}_{\ff[1,r-1],\fs[1,r-1]}\\
            \mathscr{L}_{\ff,\fs}
        \end{pmatrix}+\mathbb{M}_{\widetilde{\ff},\widetilde{\fs}}\subset\Mat_{r\times 1}(\TT_\theta).
    \end{equation}
    Due to the regularity at $t=\theta$, we can make the following definition.
    \begin{definition}
        Let $\fs\in\mathbb{Z}_{\geq 0}^r$ and $\widetilde{\bu}=(u_2,\dots,u_r)\in\Mat_{1\times(r-1)}(\mathbb{C}_\infty)$.
        \begin{enumerate}
            \item We define the monodromy module to be the following $A$-submodule inside $\Mat_{r\times 1}(\mathbb{C}_\infty)$
            \[
                \mathcal{M}_{\widetilde{\bu},\widetilde{\fs}}:=\mathbb{M}_{\widetilde{\bu},\widetilde{\fs}}\mid_{t=\theta}.
            \]
            \item We define the $\mathbb{F}_q$-linear map
            \begin{align*}
                \overset{\rightarrow}{\Li_{\mathcal{K},\fs}}(-,u_2,\dots,u_r):\mathbb{C}_\infty&\to\Mat_{r\times 1}(\mathbb{C}_\infty)/\mathcal{M}_{\widetilde{\bu},\widetilde{\fs}}\\
                u_1&\mapsto\overset{\rightarrow}{\mathscr{L}_{\bu,\fs}}\mid_{t=\theta}
            \end{align*}
            where $\bu=(u_1,\dots,u_r)$.
            \item Following \cite{Fur22}, we refer any $\mathbb{F}_q$-linear lift $\Li_{\mathcal{K},\fs}^\circ(-,u_2,\dots,u_r):\mathbb{C}_\infty\to\Mat_{r\times 1}(\mathbb{C}_\infty)$ as a branch of $\overset{\rightarrow}{\Li_{\mathcal{K},\fs}}(-,u_2,\dots,u_r)$.
        \end{enumerate}
    \end{definition}

    Our analytically continued Kochubei multiple polylogarithms satisfy the following properties.

    \begin{theorem}\label{Thm:KMPLs}
        Let $\fs=(s_1,\Tilde{\fs})\in\mathbb{Z}_{\geq 0}^r$ with $\Tilde{\fs}=(s_2,\dots,s_r)\in\mathbb{Z}_{\geq 0}^{r-1}$ and $\Tilde{\bu}=(u_2,\dots,u_r)\in\Mat_{1\times(r-1)}(\mathbb{C}_\infty)$. Then the following assertions hold.
        \begin{enumerate}
            \item For $u_1\in\mathbb{C}_\infty$ with $(u_1,\dots,u_r)\in\mathbb{D}_\fs$, we have
            \[
                \overset{\rightarrow}{\Li_{\mathcal{K},\fs}}(u_1,\dots,u_r)\equiv\begin{pmatrix}
                    \Li_{\mathcal{K},s_1}(u_1)\\
                    \Li_{\mathcal{K},(s_1,s_2)}(u_1,u_2)\\
                    \vdots\\
                    \Li_{\mathcal{K},\fs}(\bu)
                \end{pmatrix}~(\mathrm{mod}~\mathcal{M}_{\widetilde{\bu},\widetilde{\fs}}).
            \]
            \item $\overset{\rightarrow}{\Li_{\mathcal{K},\fs}}(-,u_2,\dots,u_r)$ locally admits an analytic lift as a function on $u_1$ in the sense of Theorem~\ref{Thm:Intro_I}(2).
            \item For $s_1\geq 1$, we have
            \[
                (\Delta\overset{\rightarrow}{\Li_{\mathcal{K},\fs}})(u_1,\dots,u_r)=\overset{\rightarrow}{\Li_{\mathcal{K},(s_1-1,s_2,\dots,s_r)}}(u_1,\dots,u_r).
            \]
        \end{enumerate}
    \end{theorem}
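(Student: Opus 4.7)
The plan is to lift the three-step proof of Theorem~\ref{Thm:KPLs} componentwise using the vector-valued deformation $\overset{\rightarrow}{\mathscr{L}_{\ff,\fs}}$ and its associated system \eqref{Eq:KMPLs_System}. Assertion (1) is nearly immediate: the explicit vector $(\mathscr{L}_{\ff[1,1],\fs[1,1]},\ldots,\mathscr{L}_{\ff,\fs})^\tr$ belongs to $\mathrm{Sol}(\bu,\fs)$ whenever $\ff=\bu\in\mathbb{D}_\fs$, so it represents $\overset{\rightarrow}{\mathscr{L}_{\bu,\fs}}$ modulo $\mathbb{M}_{\widetilde{\bu},\widetilde{\fs}}$, and specializing each coordinate at $t=\theta$ via \eqref{Eq:Ev_KMPLs} yields the claimed congruence modulo $\mathcal{M}_{\widetilde{\bu},\widetilde{\fs}}$.

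For (2), I would first verify that $u_1\mapsto\overset{\rightarrow}{\Li_{\mathcal{K},\fs}}(u_1,\widetilde{\bu})$ is $\mathbb{F}_q$-linear: since $\mathbb{M}_{\widetilde{\ff},\widetilde{\fs}}$ is independent of $f_1$, this follows by recursively solving \eqref{Eq:KMPLs_System} and using the $\mathbb{F}_q$-linearity of $\wp$. It then suffices to construct a power-series lift on a small disk about $0$, which translates to any center via additivity. A direct estimate of the product $|u_1/\theta^{s_1}|_\infty^{q^{i_1}}\prod_{j\geq 2}|u_j/\theta^{s_j}|_\infty^{q^{i_j}}$ shows that $(u_1,\widetilde{\bu})\in\mathbb{D}_\fs$ whenever $|u_1|_\infty$ is small enough (depending on $\widetilde{\bu}$); on such a disk, assertion (1) identifies $\overset{\rightarrow}{\Li_{\mathcal{K},\fs}}(u_1,\widetilde{\bu})$ modulo $\mathcal{M}_{\widetilde{\bu},\widetilde{\fs}}$ with the convergent vector of KMPL power series in $u_1$. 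Since the generators $\bm{\psi}_j\mid_{t=\theta}$ assemble into a unit-lower-triangular, hence $\mathbb{C}_\infty$-linearly independent, $r\times r$ matrix and $A$ is discrete in $\mathbb{C}_\infty$, the monodromy module $\mathcal{M}_{\widetilde{\bu},\widetilde{\fs}}$ is discrete; shrinking the disk so that the coset projection becomes injective then yields the required analytic branch.

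For (3), I lift \eqref{Eq:t_Motivic_CDO} to the vector setting by defining componentwise
\[
    \bm{\delta}\big(\overset{\rightarrow}{\mathscr{L}_{\ff[1,k],\fs[1,k]}}\big):=\overset{\rightarrow}{\mathscr{L}_{(\theta f_1,f_2,\ldots,f_k),\fs[1,k]}}-t\overset{\rightarrow}{\mathscr{L}_{\ff[1,k],\fs[1,k]}}
\]
for $1\leq k\leq r$, and prove $\bm{\delta}(\overset{\rightarrow}{\mathscr{L}_{\ff[1,k],\fs[1,k]}})=\overset{\rightarrow}{\mathscr{L}_{\ff[1,k],(s_1-1,\fs[2,k])}}$ by induction on $k$. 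The base case $k=1$ is the calculation in Theorem~\ref{Thm:KPLs}(3), using $(\theta-t)/(t-\theta)^{s_1}=-1/(t-\theta)^{s_1-1}$. For $k\geq 2$, the identity $\wp(tF)=t\wp(F)$ combined with the recursion \eqref{Eq:KMPLs_System} gives $\wp(\bm{\delta}(F_k))=\frac{(-1)^{s_k}f_k}{(t-\theta)^{s_k}}\bm{\delta}(F_{k-1})$, which matches $\wp$ of the claimed right-hand side by the inductive hypothesis. Since $\mathbb{M}_{\widetilde{\ff},\widetilde{\fs}}$ is unaffected by $s_1\mapsto s_1-1$, the equality descends to the quotient; specializing at $t=\theta$ converts the left-hand side into $\Delta\overset{\rightarrow}{\Li_{\mathcal{K},\fs}}(u_1,\widetilde{\bu})$.

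The two technical points I expect to require care are verifying discreteness of $\mathcal{M}_{\widetilde{\bu},\widetilde{\fs}}$ for arbitrary $\widetilde{\bu}$ — which reduces to the unit-lower-triangular structure but must be checked through the recursive construction of $\bm{\psi}_j$ — and confirming that $\bm{\delta}$ descends to a well-defined operator on the quotient, i.e., that any representative ambiguity in the two deformation series on the right cancels in their difference modulo $\mathbb{M}_{\widetilde{\ff},\widetilde{\fs}}$.
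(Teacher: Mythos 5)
Your proposal is correct and follows essentially the same route as the paper: assertion (1) from the identities \eqref{Eq:Ev_KMPLs} and \eqref{Eq:Small_Ev_KMPLs}, assertion (2) from additivity plus discreteness of $\mathcal{M}_{\widetilde{\bu},\widetilde{\fs}}$ via the unit-lower-triangular matrix $(\bm{\psi}_2,\dots,\bm{\psi}_{r+1})\mid_{t=\theta}$, and assertion (3) from the vector-valued operator $\bm{\delta}$ applied to the system \eqref{Eq:KMPLs_System} (the paper writes out all $r$ components of the system at once rather than inducting on $k$, but the computation is identical, and your two flagged technical points are exactly the ones the paper's argument relies on).
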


    \begin{proof}
        The first assertion follows immediately by \eqref{Eq:Ev_KMPLs} and \eqref{Eq:Small_Ev_KMPLs}. For the second assertion, note that $\overset{\rightarrow}{\Li_{\mathcal{K},\fs}}(-,u_2,\dots,u_r)$ is additive and whenever $(u_1,\dots,u_r)\in\mathbb{D}_\fs$ we have
        \[
            \overset{\rightarrow}{\Li_{\mathcal{K},\fs}}(u_1,\dots,u_r)\equiv\begin{pmatrix}
                \Li_{\mathcal{K},s_1}(u_1)\\
                \Li_{\mathcal{K},(s_1,s_2)}(u_1,u_2)\\
                \vdots\\
                \Li_{\mathcal{K},\fs}(\bu)
            \end{pmatrix}~(\mathrm{mod}~\mathcal{M}_{\widetilde{\bu},\widetilde{\fs}}),
        \]
        where $\Li_{\mathcal{K},s_1}(u_1),\Li_{\mathcal{K},(s_1,s_2)}(u_1,u_2),\dots,\Li_{\mathcal{K},\fs}(\bu)$ are defined by a power series in $u_1$ on a closed disk. Since $\mathcal{M}_{\widetilde{\bu},\widetilde{\fs}}$ is generated by $\bm{\psi}_2\mid_{t=\theta},\dots,\bm{\psi}_{r+1}\mid_{t=\theta}$ over $A$ and the matrix
        \[
            \bigg(\bm{\psi}_2,\dots,\bm{\psi}_{r+1}\bigg)\mid_{t=\theta}\in\GL_r(\mathbb{C}_\infty)
        \]
        is lower triangular with invertible diagonals, we deduce that $\mathcal{M}_{\widetilde{\bu},\widetilde{\fs}}$ is discrete inside $\Mat_{r\times 1}(\mathbb{C}_\infty)$. The desired property now follows.

        Finally, for $\ff=(f_1,\dots,f_r)\in\Mat_{1\times r}(\KK[t])$ and $\fn=(n_1,\dots,n_r)\in\Mat_{1\times r}(\mathbb{Z}_{\geq 0})$, we consider the operator
        \[
            \bm{\delta}(\overset{\rightarrow}{\mathscr{L}_{\ff,\fn}}):=\overset{\rightarrow}{\mathscr{L}_{(\theta f_1,f_2,\dots,f_r),\fn}}-t\overset{\rightarrow}{\mathscr{L}_{\ff,\fn}}.
        \]
        Assume that $\overset{\rightarrow}{\mathscr{L}_{\ff,\fn}}$ and $\overset{\rightarrow}{\mathscr{L}_{(\theta f_1,f_2,\dots,f_r),\fn}}$ admits a representative $(F_1,\dots,F_r)^\tr$ and $(G_1,\dots,G_r)^\tr$ in $\Mat_{r\times 1}(\TT_\theta)$ respectively. By using the difference equation \eqref{Eq:KMPLs_System} they satisfied, we deduce that
        \[
            \begin{cases}
                \wp(F_1-tG_1)=\frac{(-1)^{n_1}\theta f_1}{(t-\theta)^{n_1}}-\frac{(-1)^{n_1}tf_1}{(t-\theta)^{n_1}}=\frac{(-1)^{n_1-1} f_1}{(t-\theta)^{n_1-1}}\\
                \wp(F_2-tG_2)=\frac{(-1)^{n_2}f_2}{(t-\theta)^{n_2}}F_1-\frac{(-1)^{n_2}f_2}{(t-\theta)^{n_2}}tG_1=\frac{(-1)^{n_2}f_2}{(t-\theta)^{n_2}}\big(F_1-tG_1\big)\\
                \vdots\\
                \wp(F_r-tG_r)=\frac{(-1)^{n_r}f_r}{(t-\theta)^{n_r}}F_{r-1}-\frac{(-1)^{n_r}f_r}{(t-\theta)^{n_r}}tG_{r-1}=\frac{(-1)^{n_r}f_r}{(t-\theta)^{n_r}}\big(F_{r-1}-tG_{r-1}\big)
            \end{cases}.
        \]
        It follows that for $n_1\geq 1$ we have
        \[
            \bm{\delta}(\overset{\rightarrow}{\mathscr{L}_{\ff,\fn}})=\overset{\rightarrow}{\mathscr{L}_{\ff,(n_1-1,n_2,\dots,n_r)}}.
        \]
        Then the desired relation comes from
        \begin{align*}
            (\Delta\overset{\rightarrow}{\Li_{\mathcal{K},\fs}})(\bu)&=\overset{\rightarrow}{\Li_{\mathcal{K},\fs}}(\theta u_1,u_2,\dots,u_r)-\theta\overset{\rightarrow}{\Li_{\mathcal{K},\fs}}(\bu)\\
            &=(\overset{\rightarrow}{\mathscr{L}_{(\theta u_1,u_2,\dots,u_r),\fs}})\mid_{t=\theta}-(t\overset{\rightarrow}{\mathscr{L}_{\bu,\fs}})\mid_{t=\theta}\\
            &=\bm{\delta}(\overset{\rightarrow}{\mathscr{L}_{\bu,\fs}})\mid_{t=\theta}\\
            &=\overset{\rightarrow}{\mathscr{L}_{\bu,(s_1-1,s_2,\dots,s_r)}}\mid_{t=\theta}\\
            &=\overset{\rightarrow}{\Li_{\mathcal{K},(s_1-1,s_2,\dots,s_r)}}(\bu).
        \end{align*}
    \end{proof}

\section{Applications}
    In this section, we will present two applications of our analytically continued KPLs. The first one is the relation with extensions of Frobenius modules, and the second one is the study of linear relations among analytically continued KPLs at algebraic elements.
\subsection{Relations with the structure of $\Ext^1_{\mathscr{F}}(M_{C^{\otimes n}},M_{C^{\otimes n}})$}
    We begin this subsection with a rapid review of extensions of Frobenius modules. We refer readers to \cite[Sec.~2]{CPY19} for more details. Let $\mathscr{F}$ be the category of $\oK[t,\sigma]$-modules that are free of finite rank over $\oK[t]$. Morphisms in $\mathscr{F}$ are left $\oK[t,\sigma]$-module homomorphisms. The category $\mathscr{F}$ is called the category of Frobenius modules. 

    We denote by $\Ext^1_{\mathscr{F}}(M_{C^{\otimes n}},M_{C^{\otimes n}})$ the group of extensions of $M_{C^{\otimes n}}$ by $M_{C^{\otimes n}}$. More precisely, $\Ext^1_{\mathscr{F}}(M_{C^{\otimes n}},M_{C^{\otimes n}})$ consists of equivalent classes of Frobenius modules $[N]$ that fit into the following short exact sequence
    \[
        0\to M_{C^{\otimes n}}\to N\to M_{C^{\otimes n}}\to 0.
    \]
    For $[N_1],[N_2]\in\Ext^1_{\mathscr{F}}(M_{C^{\otimes n}},M_{C^{\otimes n}})$, we have $[N_1]=[N_2]$ if there is a $\oK[t,\sigma]$-module homomorphism $\phi:N_1\to N_2$ so that the following diagram commutes
    \[
        \begin{tikzcd}
            0 \arrow[r] & M_{C^{\otimes n}} \arrow[d, "\Id"] \arrow[r] & N_1 \arrow[d, "\phi"] \arrow[r] & M_{C^{\otimes n}} \arrow[d, "\Id"] \arrow[r] & 0 \\
            0 \arrow[r] & M_{C^{\otimes n}} \arrow[r] & N_2 \arrow[r] & M_{C^{\otimes n}} \ar[r] & 0
        \end{tikzcd}.
    \]
    Our first task is to determine the structure of the $\mathbb{F}_q[t]$-module $\Ext^1_{\mathscr{F}}(M_{C^{\otimes n}},M_{C^{\otimes n}})$. Given $F\in\oK[t]$, we consider
    \[
        \Phi_F:=\begin{pmatrix}
            (t-\theta)^n & \\
            F & (t-\theta)^n
        \end{pmatrix}\in\Mat_2(\oK[t]).
    \]
    Then $\Phi_F$ defines a Frobenius module $N_F=\Mat_{1\times 2}(\oK[t])$ whose $\oK[\sigma]$-module structure is determined by $\sigma\cdot(g_1,g_2):=(g_1,g_2)^{(-1)}\Phi_F$ for any $(g_1,g_2)\in N_F$. One checks that we have the following surjection of $\mathbb{F}_q[t]$-modules
    \begin{align*}
        \delta:\oK[t]&\twoheadrightarrow\Ext^1_{\mathscr{F}}(M_{C^{\otimes n}},M_{C^{\otimes n}})\\
        F&\mapsto [N_F].
    \end{align*}
    Then we can deduce the following identification of $\Ext^1_{\mathscr{F}}(M_{C^{\otimes n}},M_{C^{\otimes n}})$.
    \begin{lemma}\label{Lem:Ext_Identification_1}
        We have an isomorphism of $\mathbb{F}_q[t]$-modules
        \[
            \Ext^1_{\mathscr{F}}(M_{C^{\otimes n}},M_{C^{\otimes n}})\cong\oK[t]/\{B^{(-1)}(t-\theta)^n-(t-\theta)^nB\mid B\in\oK[t]\}.
        \]
    \end{lemma}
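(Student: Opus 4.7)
The plan is to use the surjection $\delta : \oK[t] \twoheadrightarrow \Ext^1_{\mathscr{F}}(M_{C^{\otimes n}},M_{C^{\otimes n}})$ already provided in the excerpt and explicitly compute its kernel. By the first isomorphism theorem it suffices to identify $\ker\delta = \{F \in \oK[t] \mid [N_F] = 0\}$, that is, those $F$ for which the extension $N_F$ is equivalent to the split extension $N_0$. By definition, $[N_F] = 0$ precisely when there exists a $\oK[t,\sigma]$-module homomorphism $\phi : N_F \to N_0$ sitting in the standard commutative diagram with identity maps on the sub and quotient copies of $M_{C^{\otimes n}}$.

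Let $\{e_1,e_2\}$ denote the standard $\oK[t]$-basis of both $N_F$ and $N_0$. Compatibility with the short exact sequence forces $\phi(e_1) = e_1$ and $\phi(e_2) \equiv e_2$ modulo the submodule $\oK[t]\cdot e_1$; hence $\phi$ is represented by a lower-triangular matrix $\begin{pmatrix} 1 & 0 \\ B & 1 \end{pmatrix}$ for some $B \in \oK[t]$, and the only remaining condition is $\sigma$-equivariance on $e_2$. Using $\sigma \cdot e_2 = Fe_1 + (t-\theta)^n e_2$ in $N_F$, we compute
\[
\phi(\sigma \cdot e_2) = Fe_1 + (t-\theta)^n(e_2 + Be_1) = \bigl(F + (t-\theta)^n B\bigr)e_1 + (t-\theta)^n e_2,
\]
whereas in $N_0$ the action of $\sigma$ on $\phi(e_2) = (B,1)$, governed by $\Phi_0 = (t-\theta)^n I_2$ together with the Frobenius twist, is
\[
\sigma \cdot \phi(e_2) = (B^{(-1)},1)\Phi_0 = B^{(-1)}(t-\theta)^n e_1 + (t-\theta)^n e_2.
\]
Equating these two expressions forces $F = B^{(-1)}(t-\theta)^n - (t-\theta)^n B$, which is precisely the stated defining relation for the alleged kernel.

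Conversely, any $B \in \oK[t]$ satisfying this relation yields a well-defined $\phi$ realizing the triviality of $N_F$. This identifies $\ker\delta$ with the image of the $\sigma$-linear operator $B \mapsto B^{(-1)}(t-\theta)^n - (t-\theta)^n B$ on $\oK[t]$, and the lemma follows. I anticipate no serious obstacle in this argument; the only point demanding care is the correct handling of the Frobenius twisting in the computation of $\sigma \cdot \phi(e_2)$ in $N_0$, and one should additionally confirm (routinely, by unwinding the Baer sum and the push-out/pull-back along multiplication by $t$) that $\delta$ respects the $\mathbb{F}_q[t]$-module structures on both sides.
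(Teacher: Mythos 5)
Your proposal is correct and follows essentially the same route as the paper: both compute $\Ker\delta$ by characterizing when $[N_F]$ splits via a morphism $\phi$ to $M_{C^{\otimes n}}\oplus M_{C^{\otimes n}}$ represented by a unipotent matrix $\begin{pmatrix}1&0\\ B&1\end{pmatrix}$, and both extract the relation $F=B^{(-1)}(t-\theta)^n-(t-\theta)^nB$ from $\sigma$-equivariance. Your explicit basis-vector computation and the remark about verifying $\mathbb{F}_q[t]$-linearity of $\delta$ are just slightly more spelled-out versions of what the paper does.
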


    \begin{proof}
        Since $\delta$ is a surjective $\mathbb{F}_q[t]$-module homomorphism, it suffices to determine $\Ker\delta$. Given $F\in\Ker\delta$, note that $\delta(F)=[N_F]$ represents the trivial class in $\Ext^1_{\mathscr{F}}(M_{C^{\otimes n}},M_{C^{\otimes n}})$ if there is a $\oK[t,\sigma]$-module homomorphism $\phi:N_F\to M_{C^{\otimes n}}\oplus M_{C^{\otimes n}}$ so that the following diagram commutes
        \[
            \begin{tikzcd}
                0 \arrow[r] & M_{C^{\otimes n}} \arrow[d, "\Id"] \arrow[r, "\iota_1"] & N_F \arrow[d, "\phi"] \arrow[r, "\pi_1"] & M_{C^{\otimes n}} \arrow[d, "\Id"] \arrow[r] & 0 \\
                0 \arrow[r] & M_{C^{\otimes n}} \arrow[r, "\iota_2"] & M_{C^{\otimes n}}\oplus M_{C^{\otimes n}} \arrow[r, "\pi_2"] & M_{C^{\otimes n}} \ar[r] & 0
            \end{tikzcd}.
        \]
        Let $\mathbf{n}_F$ and $\mathbf{n}_e$ be the standard $\oK[t]$-basis of $N_F$ and $M_{C^{\otimes n}}\oplus M_{C^{\otimes n}}$ respectively. Since $\phi$ is $\oK[t]$-linear, we have $\phi(\mathbf{n}_F)=Q\mathbf{n}_e$ for some $Q\in\GL_{2}(\oK[t])$. Note that $\phi\circ\iota_1=\iota_2\circ\Id$ and $\Id\circ\pi_1=\pi_2\circ\phi$ implies that the matrix $Q$ must be of the form
        \[
            Q=\begin{pmatrix}
                1 & \\
                B & 1
            \end{pmatrix}
        \]
        for some $B\in\oK[t]$.
        
        In addition, we have
        \[
            \sigma \phi(\mathbf{n}_F)=\sigma Q\mathbf{n}_e=Q^{(-1)}\sigma\mathbf{n}_e=Q^{(-1)}\begin{pmatrix}
                (t-\theta)^n & \\
                 & (t-\theta)^n
            \end{pmatrix}\mathbf{n}_e.
        \]
        On the other hand,
        \[
            \phi(\sigma\mathbf{n}_F)=\phi(\Phi_F\mathbf{n}_F)=\Phi_F\phi(\mathbf{n}_F)=\begin{pmatrix}
                (t-\theta)^n & \\
                F & (t-\theta)^n
            \end{pmatrix}Q\mathbf{n}_e.
        \]
        Since $\phi$ is $\sigma$-equivariant, the above two equalities imply that
        \[
            Q^{(-1)}\begin{pmatrix}
                (t-\theta)^n & \\
                 & (t-\theta)^n
            \end{pmatrix}=\begin{pmatrix}
                (t-\theta)^n & \\
                F & (t-\theta)^n
            \end{pmatrix} Q.
        \]
        By comparing the entries of the above equation, 
        it satisfies the relation
        \begin{equation}
            B^{(-1)}(t-\theta)^n=F+B(t-\theta)^n
        \end{equation}
        In particular, we conclude that
        \[
            \Ker\delta\subset\{B^{(-1)}(t-\theta)^n-(t-\theta)^nB\mid B\in\oK[t]\}.
        \]
        The reverse inclusion can be obtained by a similar argument. The desired result now follows.
    \end{proof}
    
    An immediate consequence is the relations among $\overset{\rightarrow}{\mathscr{L}_{f,n}}$ arising from $\Ext^1_{\mathscr{F}}(M_{C^{\otimes n}},M_{C^{\otimes n}})$.
    \begin{corollary}\label{Cor:Same_Class_Relation}
        Let $f,g\in\oK[t]$. If $[N_{(-1)^nf}]=[N_{(-1)^n}g]$, then we must have $(-1)^nf-(-1)^ng\in(t-\theta)^n\oK[t]$. Moreover, for any
        \[
            B\in\wp^{-1}\bigg(\frac{(-1)^nf-(-1)^ng}{(t-\theta)^n}\bigg)\subset\oK[t],
        \]
        we have
        \[
            \overset{\rightarrow}{\mathscr{L}_{f,n}}=\overset{\rightarrow}{\mathscr{L}_{g,n}}+B.
        \]
    \end{corollary}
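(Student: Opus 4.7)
The plan is to bootstrap the corollary directly from Lemma~\ref{Lem:Ext_Identification_1}, which describes the kernel of the surjection $\delta:\oK[t]\twoheadrightarrow\Ext^1_{\mathscr{F}}(M_{C^{\otimes n}},M_{C^{\otimes n}})$, and then to feed the resulting polynomial identity into the defining property of $\overset{\rightarrow}{\mathscr{L}_{(\cdot),n}}$ as an element of $\wp^{-1}\bigl(\tfrac{(-1)^n(\cdot)}{(t-\theta)^n}\bigr)$.

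First, since $\delta$ is an $\mathbb{F}_q[t]$-module homomorphism, the hypothesis $[N_{(-1)^nf}]=[N_{(-1)^ng}]$ translates to $\delta\bigl((-1)^n(f-g)\bigr)=0$. By Lemma~\ref{Lem:Ext_Identification_1}, this forces the existence of some $B\in\oK[t]$ with
\[
    (-1)^n(f-g)=B^{(-1)}(t-\theta)^n-(t-\theta)^nB=\wp(B)\cdot(t-\theta)^n.
\]
The right-hand side visibly lies in $(t-\theta)^n\oK[t]$, giving the first assertion. Dividing through by $(t-\theta)^n$, which is a unit in $\TT$, we obtain $\wp(B)=\tfrac{(-1)^nf-(-1)^ng}{(t-\theta)^n}$, so $B$ is a polynomial representative of the coset $\wp^{-1}\bigl(\tfrac{(-1)^nf-(-1)^ng}{(t-\theta)^n}\bigr)\in\TT/\mathbb{F}_q[t]$. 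Any other preimage differs from $B$ by an element of $\Ker\wp=\mathbb{F}_q[t]\subset\oK[t]$, so every element of this preimage indeed lies in $\oK[t]$, as claimed.

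Finally, to verify the displayed identity, I apply $\wp$ to $\overset{\rightarrow}{\mathscr{L}_{g,n}}+B$ using the definitions and the relation just derived:
\[
    \wp\bigl(\overset{\rightarrow}{\mathscr{L}_{g,n}}+B\bigr)=\frac{(-1)^ng}{(t-\theta)^n}+\frac{(-1)^n(f-g)}{(t-\theta)^n}=\frac{(-1)^nf}{(t-\theta)^n}.
\]
Hence $\overset{\rightarrow}{\mathscr{L}_{g,n}}+B$ is a representative of $\wp^{-1}\bigl(\tfrac{(-1)^nf}{(t-\theta)^n}\bigr)=\overset{\rightarrow}{\mathscr{L}_{f,n}}$ in $\TT_\theta/\mathbb{F}_q[t]$, which is exactly the desired equality.

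There is no serious obstacle in this argument: the content is already packaged into Lemma~\ref{Lem:Ext_Identification_1}, and the only remaining task is to recognise the cocycle $B$ coming from the $\Ext^1$-triviality as a solution of the Artin--Schreier-type equation $\wp(B)=\tfrac{(-1)^n(f-g)}{(t-\theta)^n}$. The mildest care is needed in checking that the preimage lies in $\oK[t]$ rather than merely in $\TT_\theta$, but this is immediate from $\Ker\wp=\mathbb{F}_q[t]$.
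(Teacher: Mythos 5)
Your proposal is correct and follows essentially the same route as the paper: invoke Lemma~\ref{Lem:Ext_Identification_1} to write $(-1)^n(f-g)=\wp(B)(t-\theta)^n$ for some $B\in\oK[t]$, then apply $\wp$ to $\overset{\rightarrow}{\mathscr{L}_{g,n}}+B$ and compare with the defining equation of $\overset{\rightarrow}{\mathscr{L}_{f,n}}$. Your added remarks (that the first assertion is immediate since $\wp(B)(t-\theta)^n\in(t-\theta)^n\oK[t]$, and that the whole $\wp$-preimage lies in $\oK[t]$ because $\Ker\wp=\mathbb{F}_q[t]$) are details the paper leaves implicit, but the argument is the same.
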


    \begin{proof}
        By Lemma~\ref{Lem:Ext_Identification_1}, the assumption $[N_{(-1)^nf}]=[N_{(-1)^ng}]$ implies that 
        \[
            (-1)^nf-(-1)^ng\in\{B^{(-1)}(t-\theta)^n-(t-\theta)^nB\mid B\in\oK[t]\}.
        \]
        Thus, for any
        \[
            B\in\wp^{-1}\bigg(\frac{(-1)^nf-(-1)^ng}{(t-\theta)^n}\bigg)\subset\oK[t]
        \]
        we have
        \[
            \wp\bigg(\overset{\rightarrow}{\mathscr{L}_{g,n}}+B\bigg)=\frac{(-1)^ng}{(t-\theta)^n}+\big(B^{(-1)}-B\big)=\frac{(-1)^nf}{(t-\theta)^n}
        \]
        which gives the desired result.
    \end{proof}
    
    For an integer $n\geq 1$, consider the $\oK$-vector space $\mathcal{V}_n:=\Mat_{n\times 1}(\oK)$ equipped with the $\mathbb{F}_q[t]$-module structure
    \begin{align*}
        \mathbb{F}_q[t]&\to\End_{\oK}(\mathcal{V}_n)\cong\Mat_{n\times n}(\oK)\\
        a&\mapsto [a]_n,
    \end{align*}
    that is determined by
    \[
        [t]_n=\begin{pmatrix}
                \theta & 1 & &\\
                 & \ddots & \ddots & \\
                 & & \ddots & 1\\
                 & & & \theta
            \end{pmatrix}.
    \]
    It has the natural sup-norm defined by
    \[
        |(x_{n-1},\dots,x_0)^\tr|_n:=\max_{0\leq i\leq n-1}\{|x_i|_\infty\}.
    \]
    The second characterization of the $\mathbb{F}_q[t]$-module $\Ext^1_{\mathscr{F}}(M_{C^{\otimes n}},M_{C^{\otimes n}})$ is given as follows.
    \begin{lemma}\label{Lem:TMod}
        We have an isomorphism of $\mathbb{F}_q[t]$-modules
        \[
            \Ext^1_{\mathscr{F}}(M_{C^{\otimes n}},M_{C^{\otimes n}})\cong\mathcal{V}_n.
        \]
    \end{lemma}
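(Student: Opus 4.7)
The plan is to compose the identification in Lemma~\ref{Lem:Ext_Identification_1} with an explicit isomorphism onto $\mathcal{V}_n$. The first step will be to simplify the defining submodule. Since $\oK[t]$ is commutative, every $B \in \oK[t]$ satisfies $B^{(-1)}(t-\theta)^n - (t-\theta)^n B = (t-\theta)^n \wp(B)$ where $\wp(B) := B^{(-1)} - B$. Writing $B = \sum_k b_k t^k$, the operator $\wp$ acts coefficient-wise as $b_k \mapsto b_k^{1/q} - b_k$, which is surjective on $\oK$ because $\oK$ is algebraically closed and every Artin-Schreier equation $X^{1/q} - X = h_k$ is solvable there. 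Consequently $\wp(\oK[t]) = \oK[t]$, and the submodule collapses to $(t-\theta)^n \oK[t]$. This will yield an isomorphism of $\mathbb{F}_q[t]$-modules
\[
    \Ext^1_{\mathscr{F}}(M_{C^{\otimes n}}, M_{C^{\otimes n}}) \cong \oK[t]/(t-\theta)^n \oK[t].
\]

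Next I will produce an explicit $\mathbb{F}_q[t]$-isomorphism from the right-hand side onto $\mathcal{V}_n$ via hyperderivative expansion at $t = \theta$. I will define
\[
    \Phi(F) := \big(({\rm d}_t^{n-1} F)\mid_{t=\theta},\, \ldots,\, ({\rm d}_t^0 F)\mid_{t=\theta}\big)^\tr \in \mathcal{V}_n.
\]
This is well-defined on the quotient because every $F \in (t-\theta)^n \oK[t]$ has vanishing hyperderivatives of order $<n$ at $t=\theta$, and it is an $\oK$-linear bijection since each residue class has the unique representative $\sum_{i=0}^{n-1} ({\rm d}_t^i F)\mid_{t=\theta}(t-\theta)^i$ of degree $<n$.

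Finally, I will verify $\mathbb{F}_q[t]$-equivariance, i.e., $\Phi(tF) = [t]_n \Phi(F)$. Writing $t = (t-\theta) + \theta$ and expanding, I expect to obtain
\[
    t \cdot \sum_{i=0}^{n-1} c_i (t-\theta)^i \equiv \theta c_0 + \sum_{j=1}^{n-1}(c_{j-1} + \theta c_j)(t-\theta)^j \pmod{(t-\theta)^n},
\]
which matches the action of the Jordan block $[t]_n$ on $(c_{n-1}, \ldots, c_0)^\tr$ term by term. The conceptual core of the argument is the Artin--Schreier surjectivity step, which is what makes the submodule collapse so cleanly; Steps 2 and 3 are routine bookkeeping with hyperderivatives, so I do not anticipate a serious obstacle.
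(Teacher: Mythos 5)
Your proposal is correct and follows essentially the same route as the paper: both reduce modulo the submodule $\{B^{(-1)}(t-\theta)^n-(t-\theta)^nB\}$ by solving Artin--Schreier equations coefficient-wise, identify classes with polynomials of degree $<n$ in $(t-\theta)$, and match the $t$-action with the Jordan block $[t]_n$. Your upfront observation that commutativity collapses the submodule to exactly $(t-\theta)^n\oK[t]$ is a slightly cleaner packaging that makes the injectivity of the identification immediate, but the underlying argument is the same.
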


    \begin{proof}
        By Lemma~\ref{Lem:Ext_Identification_1}, we have an isomorphism between $\mathbb{F}_q[t]$-modules
        \[
            \Ext^1_{\mathscr{F}}(M_{C^{\otimes n}},M_{C^{\otimes n}})\cong\oK[t]/\{B^{(-1)}(t-\theta)^n-(t-\theta)^nB\mid B\in\oK[t]\}.
        \]
        We denote by $\mathcal{P}_n:=\{x_0+x_1(t-\theta)+\cdots+x_{n-1}(t-\theta)^{n-1}\mid x_0,\dots,x_{n-1}\in\oK\}\subset\oK[t]$. Then there is an $\mathbb{F}_q$-linear bijection between
        \[
            \oK[t]/\{B^{(-1)}(t-\theta)^n-(t-\theta)^nB\mid B\in\oK[t]\}\cong\mathcal{P}_n.
        \]
        It suffices to show that for any $F\in\oK[t]$, there is $G\in\oK[t]$ so that
        \[
            F-(G^{(-1)}-G)(t-\theta)^n=x_0+x_1(t-\theta)+\cdots+x_{n-1}(t-\theta)^{n-1}
        \]
        for some $x_0,\dots,x_{n-1}\in\oK$. The assertion is clear when $r:=\deg_tF<n$. Assume that $r\geq n$. We may express $F=F_0+F_1(t-\theta)+\cdots+F_r(t-\theta)^r$ and set $R_F:=F_n+F_{n+1}(t-\theta)+\cdots+F_r(t-\theta)^{r-n}$. By solving the Artin-Schreier equation coefficient-wise, there is $G\in\oK[t]$ with $G^{(-1)}-G=R_f$. Then one checks directly that
        \begin{align*}
            F-(G^{(-1)}-G)(t-\theta)^n&=F-R_F(t-\theta)^n\\
            &=F_0+F_1(t-\theta)+\cdots+F_{n-1}(t-\theta)^{n-1}
        \end{align*}
        This verifies the desired $\mathbb{F}_q$-linear bijection.

        Finally, we aim to show that the $\mathbb{F}_q$-linear bijection
        \begin{align*}
            \oK[t]/\{B^{(-1)}(t-\theta)^n-(t-\theta)B\mid B\in\oK[t]\}&\to\mathcal{V}_n\\
            \overline{x_0+x_1(t-\theta)+\cdots+x_{n-1}(t-\theta)^{n-1}}&\mapsto\begin{pmatrix}
                x_{n-1}\\
                \vdots\\
                x_1\\
                x_0
            \end{pmatrix}
        \end{align*}
        is an isomorphism of $\mathbb{F}_q[t]$-modules. Indeed, 
        \begin{align*}
            t\sum_{j=0}^{n-1}x_j(t-\theta)^j&=\big(\theta+(t-\theta)\big)\sum_{j=0}^{n-1}x_j(t-\theta)^j\\
            &=\theta x_0+\sum_{j=1}^{n-1}(\theta x_j+x_{j-1})+x_{n-1}(t-\theta)^n.
        \end{align*}
        If we simply choose $y$ to be a solution of the Artin-Schreier equation $X^{1/q}-X=x_{n-1}$, then 
        \[
            x_{n-1}(t-\theta)^n=(y^{(-1)}-y)(t-\theta)^n\in\{B^{(-1)}(t-\theta)^n-(t-\theta)B\mid B\in\oK[t]\}.
        \]
        It follows that the $t$-action
        \[
            t\cdot\sum_{j=0}^{n-1}\overline{x_j(t-\theta)^j}=\overline{\theta x_0}+\sum_{j=1}^{n-1}\overline{(\theta x_j+x_{j-1})}\in\{B^{(-1)}(t-\theta)^n-(t-\theta)B\mid B\in\oK[t]\}
        \]
        coincides with
        \[
            [t]_n\begin{pmatrix}
                x_{n-1}\\
                \vdots\\
                x_1\\
                x_0
            \end{pmatrix}=\begin{pmatrix}
                \theta & 1 & &\\
                 & \ddots & \ddots & \\
                 & & \ddots & 1\\
                 & & & \theta
            \end{pmatrix}\begin{pmatrix}
                x_{n-1}\\
                \vdots\\
                x_1\\
                x_0
            \end{pmatrix}.
        \]
        The desired result now follows.
    \end{proof}
    
    The main theme in this subsection is to give an explicit evaluation for our analytically continued $n$-th Kochubei polylogarithms and produce $\oK$-linear relations among them using the structure of $\Ext^1_{\mathscr{F}}(M_{C^{\otimes n}},M_{C^{\otimes n}})$. We begin with the following identity. Let $f\in\oK[t]$ with $\| f\|<q^n$. We express $f=f_0+f_1t+\cdots+f_mt^m$. Then we have
    \begin{equation}\label{Eq:Small_Deformation_Series}
        \mathscr{L}_{f,n}=\sum_{i\geq 1}\frac{f^{(i)}}{(\theta^{q^i}-t)^n}=\sum_{i\geq 1}\frac{f_0^{q^i}+f_1^{q^i}t+\cdots+f_m^{q^i}t^m}{(\theta^{q^i}-t)^n}=\sum_{j=0}^mt^j\mathscr{L}_{f_j,n}\in\TT_\theta.
    \end{equation}
    For each $n\in\mathbb{Z}_{>0}$, we denote by
    \[
        \mathcal{B}_n:=\{F\in\oK[t]\mid\|F\|<q^n\}\subset\oK[t]
    \]
    the set of small polynomials with respect to $n$. Our key result in this subsection is the generation of $\Ext^1_{\mathscr{F}}(M_{C^{\otimes n}},M_{C^{\otimes n}})$ by small polynomials over $\mathbb{F}_q[t]$.

    \begin{theorem}\label{Thm:Small_Gen}
        The following assertion holds.
        \[
            \Ext^1_{\mathscr{F}}(M_{C^{\otimes n}},M_{C^{\otimes n}})=\mathrm{Span}_{\mathbb{F}_q[t]}\{[N_G]\mid G\in\mathcal{B}_n\}.
        \]
        Consequently, given $f\in\oK[t]$, there is $g\in\mathcal{B}_n$, $\ell\in\mathbb{Z}_{\geq 0}$, and $B\in\oK[t]$ so that
        \[
            \overset{\rightarrow}{\mathscr{L}_{f,n}}=t^\ell\overset{\rightarrow}{\mathscr{L}_{g,n}}+B.
        \]
    \end{theorem}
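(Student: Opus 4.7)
The plan is to deduce both assertions by producing, for each $f\in\oK[t]$, an explicit pair $(\ell,g)$ with $\ell\in\mathbb{Z}_{\geq 0}$ and $g\in\mathcal{B}_n$ such that $[N_f]=t^\ell[N_g]$; the series identity $\overset{\rightarrow}{\mathscr{L}_{f,n}}=t^\ell\overset{\rightarrow}{\mathscr{L}_{g,n}}+B$ then follows by the same argument as in Corollary~\ref{Cor:Same_Class_Relation} applied to the pair $(f,t^\ell g)$.

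First I would simplify the kernel of the surjection $\delta$ from Lemma~\ref{Lem:Ext_Identification_1}. Since $\oK$ is algebraically closed, the Artin--Schreier equation $x^{1/q}-x=c$ is solvable in $\oK$ for every $c\in\oK$; applying this coefficient by coefficient yields surjectivity of $\wp$ on $\oK[t]$, so
\[
    \Ker\delta=\{(B^{(-1)}-B)(t-\theta)^n\mid B\in\oK[t]\}=(t-\theta)^n\oK[t].
\]
Hence $[N_f]=t^\ell[N_g]$ is equivalent to $f\equiv t^\ell g\pmod{(t-\theta)^n\oK[t]}$, i.e.\ to the Taylor expansions of $f$ and $t^\ell g$ at $t=\theta$ agreeing through order $n-1$.

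Next I would solve this Taylor matching problem explicitly. Writing $F_k:=({\rm{d}}_t^k f)\mid_{t=\theta}$ and $G_k:=({\rm{d}}_t^k g)\mid_{t=\theta}$, the hyperderivative product rule together with $({\rm{d}}_t^j t^\ell)\mid_{t=\theta}=\binom{\ell}{j}\theta^{\ell-j}$ turns the required congruence into the lower-triangular system
\[
    F_k=\theta^\ell\sum_{j=0}^{k}\binom{\ell}{j}\theta^{-j}G_{k-j}\qquad(0\leq k\leq n-1),
\]
which has a unique recursive solution $G_0,\dots,G_{n-1}$ (diagonal coefficient $\theta^\ell\neq 0$). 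Setting $g:=\sum_{k=0}^{n-1}G_k(t-\theta)^k\in\oK[t]$ produces a polynomial with the correct Taylor data, whence $f\equiv t^\ell g\pmod{(t-\theta)^n\oK[t]}$.

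The main obstacle is to ensure $\|g\|<q^n$, i.e.\ $g\in\mathcal{B}_n$, even though $g$ is assembled from Taylor data at $t=\theta$ rather than at $t=0$. The saving observation is that $|\binom{\ell}{j}|_\infty\leq 1$ in characteristic $p$ and $|\theta^{-j}|_\infty=q^{-j}$, so an induction on the triangular system yields $|G_k|_\infty\leq M/q^\ell$ with $M:=\max_k|F_k|_\infty$; thus $\|g\|_\theta=\max_k|G_k|_\infty\to 0$ as $\ell\to\infty$. Invoking Lemma~\ref{Lem:CompareNorms} with $g$ treated as a polynomial of degree $\leq n-1$ converts this into $\|g\|\to 0$, so choosing $\ell$ sufficiently large (a direct expansion of $g$ in powers of $t$ shows that $\ell\geq\lceil\log_q M\rceil$ already suffices) forces $\|g\|<q^n$, yielding both the spanning statement and the explicit form asserted in the ``consequently'' clause.
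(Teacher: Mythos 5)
Your proposal is correct and follows essentially the same route as the paper: your explicit identification $\Ker\delta=(t-\theta)^n\oK[t]$ and the lower-triangular system for the $G_k$ are precisely the coordinate form of the paper's passage through Lemma~\ref{Lem:TMod} and its computation of $[t]_n^{-\ell}\bv_F$, and both arguments conclude with the same contraction estimate followed by Lemma~\ref{Lem:CompareNorms}. The only cosmetic difference is that you solve the system $[t]_n^{\ell}G=F$ by hand rather than invoking the inverse of the Jordan block.
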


    \begin{proof}
        Let $[N]\in\Ext^1_{\mathscr{F}}(M_{C^{\otimes n}},M_{C^{\otimes n}})$. By Lemma~\ref{Lem:Ext_Identification_1}, there is $F\in\oK[t]$ so that $[N]=[N_F]$. Moreover, using Lemma~\ref{Lem:TMod} there is $\bv_F=(x_{n-1},\dots,x_0)^\tr\in\mathcal{V}_n$ corresponds to $[N_F]$. Since
        \[
            [t]_n^{-1}=\begin{pmatrix}
                \theta & 1 & &\\
                 & \ddots & \ddots & \\
                 & & \ddots & 1\\
                 & & & \theta
            \end{pmatrix}^{-1}=\begin{pmatrix}
                \theta^{-1} & -\theta^{-2} & \cdots & (-1)^{1-n}\theta^{-n}\\
                 & \ddots & \ddots & \vdots\\
                 & & \ddots & -\theta^{-2}\\
                 & & & \theta^{-1}
            \end{pmatrix},
        \]
        we must have
        \[
            \lim_{\ell\to\infty}|[t]_n^{-\ell}\bv_F|_n=0.
        \]
        For each $\ell\geq 0$, we express $[t]_n^{-\ell}\bv_F=(x_{n-1}^{[\ell]},\dots,x_0^{[\ell]})^\tr$ and consider 
        \[
            G_\ell:=x_0^{[\ell]}+\cdots+x_{n-1}^{[\ell]}(t-\theta)^{n-1}\in\oK[t].
        \]
        Using $\|G_\ell\|_\theta=|[t]_n^{-\ell}\bv_F|_n$ and Lemma~\ref{Lem:CompareNorms}, there exists $\ell\geq 0$ such that $G_\ell\in\mathcal{B}_n$. Moreover, by the $\mathbb{F}_q[t]$-module isomorphism obtained in Lemma~\ref{Lem:TMod}, we conclude that
        \[
            [N_F]=t^\ell\cdot[N_{G_\ell}]=[N_{t^\ell G_\ell}]\in\mathrm{Span}_{\mathbb{F}_q[t]}\{[N_G]\mid G\in\mathcal{B}_n\}
        \]
        which implies the first assertion.

        To prove the second part, let $f\in\oK[t]$. Then there exist $g\in\mathcal{B}_n$ and $\ell\in\mathbb{Z}_{\geq 0}$ so that
        \[
            [N_{(-1)^nf}]=t^\ell\cdot[N_{(-1)^ng}]=[N_{t^\ell(-1)^ng}].
        \]
        It follows from the $\mathbb{F}_q[t]$-linearity of $\overset{\rightarrow}{\mathscr{L}_{-,n}}$ and Corollary~\ref{Cor:Same_Class_Relation} that there is $B\in\oK[t]$ with
        \[
            \overset{\rightarrow}{\mathscr{L}_{f,n}}=\overset{\rightarrow}{\mathscr{L}_{t^\ell g,n}}+B=t^\ell\overset{\rightarrow}{\mathscr{L}_{g,n}}+B
        \]
        which completes the proof.
    \end{proof}

    The above theorem provides an explicit evaluation of our analytically continued $n$-th Kochubei polylogarithms.
    \begin{corollary}\label{Cor:Series_Expression}
        Given $u\in\oK$, there exist $m\in\mathbb{Z}_{>0}$ and explicitly constructed $c\in\KK$, $a_1,\dots,a_m\in A$, as well as $f_1,\dots,f_m\in\KK$ with $|f_i|_\infty<q^n$ so that
        \[
            \overset{\rightarrow}{\Li_{\mathcal{K},n}}(u)=c+\sum_{j=1}^ma_j\overset{\rightarrow}{\Li_{\mathcal{K},n}}(f_j)=c+\sum_{j=1}^ma_j\left(\sum_{i\geq 1}\frac{f_j^{q^i}}{(\theta^{q^i}-\theta)^n}\right)\in\mathbb{C}_\infty/A.
        \]
    \end{corollary}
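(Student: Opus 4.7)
The plan is to apply Theorem~\ref{Thm:Small_Gen} to the constant polynomial $u \in \oK \subset \oK[t]$, which is exactly the structural input we need to reduce the problem to the convergent regime. The small-polynomial generation result will hand us $g \in \mathcal{B}_n$, an integer $\ell \geq 0$, and a polynomial $B \in \oK[t]$ such that
\[
    \overset{\rightarrow}{\mathscr{L}_{u,n}} \;=\; t^\ell\, \overset{\rightarrow}{\mathscr{L}_{g,n}} + B \qquad \text{in } \TT_\theta/\FF_q[t].
\]
This is the key step that converts the (potentially divergent) datum $u$ into data associated to a polynomial $g$ with $\|g\| < q^n$, plus a polynomial correction term.

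Next, I will expand $g$ as $g = g_0 + g_1 t + \cdots + g_m t^m$ with each $g_j \in \oK$. Since $\|g\| < q^n$, each coefficient satisfies $|g_j|_\infty < q^n$, which is precisely the condition needed for the $t$-deformation series in \eqref{Eq:Small_Deformation_Series} to split cleanly as
\[
    \mathscr{L}_{g,n} \;=\; \sum_{j=0}^{m} t^j \mathscr{L}_{g_j,n}.
\]
By \eqref{Eq:Small_Ev_KPLs}, each $\overset{\rightarrow}{\mathscr{L}_{g_j,n}}$ has $\mathscr{L}_{g_j,n} + \FF_q[t]$ as a representative, so we obtain
\[
    \overset{\rightarrow}{\mathscr{L}_{u,n}} \;=\; \sum_{j=0}^{m} t^{\ell+j}\, \overset{\rightarrow}{\mathscr{L}_{g_j,n}} + B
\]
inside $\TT_\theta/\FF_q[t]$.

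Finally, I specialize at $t = \theta$. The left-hand side becomes $\overset{\rightarrow}{\Li_{\mathcal{K},n}}(u)$ in $\mathbb{C}_\infty/A$; each term $t^{\ell+j}$ specializes to $\theta^{\ell+j} \in A$; each $\overset{\rightarrow}{\mathscr{L}_{g_j,n}}|_{t=\theta}$ equals $\overset{\rightarrow}{\Li_{\mathcal{K},n}}(g_j)$, which by Theorem~\ref{Thm:KPLs}(1) coincides with the convergent series $\Li_{\mathcal{K},n}(g_j) = \sum_{i \geq 1} g_j^{q^i}/(\theta^{q^i}-\theta)^n$ modulo $A$; and $B|_{t=\theta}$ provides the constant $c$. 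Setting $c := B|_{t=\theta}$, $a_{j+1} := \theta^{\ell+j}$, and $f_{j+1} := g_j$ yields the asserted formula, with all constants explicit because each step in the proof of Theorem~\ref{Thm:Small_Gen} (the bijection with $\mathcal{V}_n$ and the iterate $[t]_n^{-\ell}\bv_F$) is constructive.

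Since Theorem~\ref{Thm:Small_Gen} does the substantive work, this corollary is essentially a bookkeeping unwinding of definitions; the only point requiring a moment of care is verifying that the exponent $\ell$ produced by the theorem, combined with the polynomial expansion of $g$, gives coefficients $a_j \in A$ rather than elements of a larger ring — this is automatic because $t$ specializes to $\theta \in A$ and the coefficients of $g$ are already made small by Theorem~\ref{Thm:Small_Gen}, so no further reduction step is needed.
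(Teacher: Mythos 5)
Your proposal is correct and follows essentially the same route as the paper, which proves the corollary by combining Theorem~\ref{Thm:Small_Gen} with the decomposition \eqref{Eq:Small_Deformation_Series} and specializing at $t=\theta$; you have simply unwound the same two ingredients in more detail. The explicit bookkeeping (noting that $\|g\|<q^n$ forces $|g_j|_\infty<q^n$ coefficientwise, and that $t^{\ell+j}\mapsto\theta^{\ell+j}\in A$) is exactly what makes the paper's one-line argument work.
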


    \begin{proof}
        By specializing at $t=\theta$, the desired result follows immediately from \eqref{Eq:Small_Deformation_Series} and Theorem~\ref{Thm:Small_Gen}.
    \end{proof}

    In what follows, we provide an explicit example on how to produce $\oK$-linear relations among the $n$-th Kochubei polylogarithm from our machinery.
    
    \begin{example}
        Let $n=2$ and $u=\theta^2\in\oK$. Then we have the corresponding point $\bv=(0,\theta^2)^\tr\in\mathcal{V}_2$. We may compute
        \[
            [t]_2^{-1}\bv=\begin{pmatrix}
                \theta^{-1} & -\theta^{-2}\\
                 & \theta^{-1}
            \end{pmatrix}\begin{pmatrix}
                0\\
                \theta^2
            \end{pmatrix}=\begin{pmatrix}
                -1\\
                \theta
            \end{pmatrix}.
        \]
        The associated polynomial is given by $G_1=\theta-(t-\theta)=-t+2\theta\in\mathcal{B}_2$. It follows that
        \[
            [N_{\theta^2}]=t\cdot[N_{-t+2\theta}]=[N_{-t^2+2\theta t}]\in\Ext^1_{\mathscr{F}}(M_{C^{\otimes 2}},M_{C^{\otimes 2}}).
        \]
        One computes directly that
        \[
            -t^2+2\theta=t(-t+2\theta)=\big(\theta+(t-\theta)\big)\big(\theta-(t-\theta)\big)=\theta^2-(t-\theta)^2.
        \]
        We fix a solution of $X^{1/q}-X=-1$, denoted by $\epsilon\in\oK$. Then the relation obtained from Theorem~\ref{Thm:Small_Gen} reads as
        \[
            \overset{\rightarrow}{\mathscr{L}_{\theta^2,2}}=t\overset{\rightarrow}{\mathscr{L}_{-t+2\theta,2}}+\epsilon=t\big(\overset{\rightarrow}{\mathscr{L}_{2\theta,2}}+t\overset{\rightarrow}{\mathscr{L}_{-1,2}}\big)+\epsilon\in\TT_\theta/\mathbb{F}_q[t].
        \]
        By specializing at $t=\theta$, we get
        \[
            \overset{\rightarrow}{\Li_{\mathcal{K},2}}(\theta^2)=\epsilon+\theta\overset{\rightarrow}{\Li_{\mathcal{K},2}}(2\theta)-\theta^2\overset{\rightarrow}{\Li_{\mathcal{K},2}}(1)\in\mathbb{C}_\infty/A.
        \]
        In fact, one can realize this $\oK$-linear relation as the relation arising from the $\mathbb{F}_q[t]$-linear relation
        \[
            \begin{pmatrix}
                0\\
                \theta^2
            \end{pmatrix}=[t^2]_2\begin{pmatrix}
                0\\
                -1
            \end{pmatrix}+[t]_2\begin{pmatrix}
                0\\
                2\theta
            \end{pmatrix}.
        \]
        A refined version of this idea will be presented in the next subsection.
    \end{example}

\subsection{Linear relations among KPLs at algebraic points}
    In what follows, we aim to study linear relations among KPLs at algebraic points. The primary result in this subsection can be stated as follows.

    \begin{theorem}\label{Thm:Linear_Relations}
        For $n\geq 1$, let $u_1,\dots,u_\ell\in\oK$ with associated algebraic points $\bv_{u_1}=(0,\dots,0,(-1)^nu_1)^\tr,\dots,\bv_{u_\ell}=(0,\dots,0,(-1)^nu_\ell)^\tr\in\mathcal{V}_n$ in the $\mathbb{F}_q[t]$-module $\mathcal{V}_n$. If we fix an $\mathbb{F}_q$-linear lift $\Li_{\mathcal{K},n}^\circ:\mathbb{C}_\infty\to\mathbb{C}_\infty$, then we have the following inequalities.
        \begin{enumerate}
            \item $\dim_K\mathrm{Span}_K\{1,\Li_{\mathcal{K},n}^\circ(u_1),\dots,\Li_{\mathcal{K},n}^\circ(u_\ell)\}\geq\rank_{\mathbb{F}_q[t]}\mathrm{Span}_{\mathbb{F}_q[t]}\{\bv_{u_1},\dots,\bv_{u_\ell}\}$.
            \item $\rank_{\mathbb{F}_q[t]}\mathrm{Span}_{\mathbb{F}_q[t]}\{\bv_{u_1},\dots,\bv_{u_\ell}\}+1\geq\dim_{\oK}\mathrm{Span}_{\oK}\{1,\Li_{\mathcal{K},n}^\circ(u_1),\dots,\Li_{\mathcal{K},n}^\circ(u_\ell)\}$.
        \end{enumerate}
    \end{theorem}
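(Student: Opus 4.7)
The plan is to prove the upper bound (Part 2) by lifting $\mathbb{F}_q[t]$-linear relations among the $\bv_{u_i}$ to $\oK$-linear relations via Corollary~\ref{Cor:Same_Class_Relation}, and to prove the lower bound (Part 1) by applying the ABP criterion (Proposition~\ref{Prop:ABP_P313}) to a carefully built fiber-product Frobenius module.

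For Part (2), I would let $r:=\rank_{\mathbb{F}_q[t]}\mathrm{Span}_{\mathbb{F}_q[t]}\{\bv_{u_1},\ldots,\bv_{u_\ell}\}$ and, after reindexing, assume that $\bv_{u_1},\ldots,\bv_{u_r}$ form an $\mathbb{F}_q[t]$-basis. For each $r<j\leq\ell$ write $\bv_{u_j}=\sum_{i=1}^{r}a_{ij}(t)\bv_{u_i}$ with $a_{ij}\in\mathbb{F}_q[t]$. Transporting this across the identification in Lemma~\ref{Lem:TMod} (via $\bv_{u_i}\leftrightarrow[N_{(-1)^n u_i}]$) yields $[N_{(-1)^n u_j}]=\bigl[N_{(-1)^n\sum_{i}a_{ij}u_i}\bigr]$ in $\Ext^1_{\mathscr{F}}(M_{C^{\otimes n}},M_{C^{\otimes n}})$. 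Corollary~\ref{Cor:Same_Class_Relation} then furnishes some $B_j\in\oK[t]$ with
\[
    \overset{\rightarrow}{\mathscr{L}_{u_j,n}}=\sum_{i=1}^{r}a_{ij}(t)\,\overset{\rightarrow}{\mathscr{L}_{u_i,n}}+B_j \quad\text{in } \TT_\theta/\mathbb{F}_q[t].
\]
Specializing at $t=\theta$ gives $\Li_{\mathcal{K},n}^{\circ}(u_j)\equiv\sum_{i=1}^{r}a_{ij}(\theta)\,\Li_{\mathcal{K},n}^{\circ}(u_i)+B_j(\theta)\pmod{A}$, so every $\Li^{\circ}(u_j)$ with $j>r$ lies in $\mathrm{Span}_{\oK}\{1,\Li^{\circ}(u_1),\ldots,\Li^{\circ}(u_r)\}$, proving $\dim_{\oK}\leq r+1$.

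For Part (1), I would assemble the fiber-product system: fix $\mathscr{L}_i\in\wp^{-1}\bigl((-1)^n u_i/(t-\theta)^n\bigr)\subset\TT_\theta$ and set
\[
    \Phi:=\begin{pmatrix}(t-\theta)^n & & & \\ (-1)^n u_1 & (t-\theta)^n & & \\ \vdots & & \ddots & \\ (-1)^n u_\ell & & & (t-\theta)^n\end{pmatrix},\qquad \Psi:=\bigl(\Omega^n,\,\Omega^n\mathscr{L}_1,\,\ldots,\,\Omega^n\mathscr{L}_\ell\bigr)^{\tr}.
\]
A direct computation shows $\Psi^{(-1)}=\Phi\Psi$ and $\det\Phi|_{t=0}=(-\theta)^{n(\ell+1)}\neq 0$, so Proposition~\ref{Prop:ABP_P313} gives $\Psi\in\Mat_{(\ell+1)\times 1}(\EE)$. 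Since $\Omega^n(\theta)\neq 0$, $\Psi(\theta)=\Omega^n(\theta)\cdot(1,\Li^{\circ}(u_1),\ldots,\Li^{\circ}(u_\ell))^{\tr}$, so any $K$-linear relation $c_0+\sum_{i\geq 1}c_i\,\Li^{\circ}(u_i)=0$ (cleared to $(c_0,\ldots,c_\ell)\in A^{\ell+1}$) is the same as a $K$-linear relation on the entries of $\Psi(\theta)$. The ABP criterion \cite[Thm.~3.1.1]{ABP04} then produces $P=(P_0,\ldots,P_\ell)\in\oK[t]^{\ell+1}$ with $P(\theta)=(c_0,\ldots,c_\ell)$ and $P\,\Psi=0$, i.e., $P_0+\sum_{i\geq 1}P_i\mathscr{L}_i=0$ in $\TT$. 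Applying $\wp$ to this identity and comparing polar parts at $t=\theta$ --- $\wp(P_0)$ and each $(P_i^{(-1)}-P_i)\mathscr{L}_i$ are regular there, while $\sum_{i\geq 1}P_i^{(-1)}(t)(-1)^n u_i/(t-\theta)^n$ carries the only potential pole --- forces $\sum_{i\geq 1}P_i^{(-1)}(t)u_i\in(t-\theta)^n\,\oK[t]$.

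The crucial and main technical step is then to upgrade this $\oK[t]$-condition into a genuine $\mathbb{F}_q[t]$-linear dependence among the $\bv_{u_i}$, which I expect to be the chief obstacle. Following the standard treatment of \cite{CPY19}, the plan is to iterate the $\sigma$-action $P\mapsto P^{(-1)}\Phi$ (which preserves the module of ABP relations) to extract at each step a new polynomial condition and, using the identification $\mathcal{V}_n\cong\oK[t]/(t-\theta)^n\oK[t]$ from Lemma~\ref{Lem:TMod}, to reorganize these into the requirement $\sum_{i\geq 1}\tilde{P}_i(t)u_i\in(t-\theta)^n\oK[t]$ for the $\mathbb{F}_q[t]$-lifts $\tilde{P}_i$ of $P_i(\theta)\in A$ (via the canonical isomorphism $\mathbb{F}_q[t]\cong A$ sending $t\mapsto\theta$). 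This is precisely the statement $\sum_{i\geq 1}\tilde{P}_i(t)\bv_{u_i}=0$ in $\mathcal{V}_n$. Once this descent is secured, a routine count closes the argument: the $(\ell+1-d)$ independent $K$-relations on $\{1,\Li^{\circ}(u_i)\}$ yield, after projection onto the last $\ell$ coordinates (which drops $K$-dimension by at most one), at least $\ell-d$ independent $\mathbb{F}_q[t]$-relations on $\{\bv_{u_i}\}$, giving $r\leq d=\dim_K\mathrm{Span}_K\{1,\Li^{\circ}(u_1),\ldots,\Li^{\circ}(u_\ell)\}$, which is Part (1).
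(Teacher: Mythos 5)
Your Part (2) is essentially the paper's argument: the paper lifts an arbitrary relation $[c_1]\bv_{u_1}+\cdots+[c_\ell]\bv_{u_\ell}=0$ through Lemma~\ref{Lem:TMod} and Corollary~\ref{Cor:Same_Class_Relation} and specializes at $t=\theta$, exactly as you do. One small repair: over the PID $\mathbb{F}_q[t]$ the span of the $\bv_{u_i}$ need not admit a basis consisting of some of the $\bv_{u_i}$, so you should take a maximal linearly independent subset and work with relations $c_j\bv_{u_j}=\sum_i a_{ij}\bv_{u_i}$ with $c_j\neq 0$; since $c_j(\theta)\neq 0$ this costs nothing.

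Part (1), however, has a genuine gap, and it sits exactly where you flag it. From your $(\ell+1)$-dimensional system the ABP criterion hands you $P_0,\dots,P_\ell\in\oK[t]$ with $P_i(\theta)=c_i$ and $P_0+\sum_iP_i\mathscr{L}_i=0$, and the only consequence you actually extract from this is the polar condition $\sum_iP_i^{(-1)}u_i\in(t-\theta)^n\oK[t]$. That condition alone carries essentially no information: already for $\ell=2$ the constants $P_1=u_2^q$, $P_2=-u_1^q$ satisfy $P_1^{(-1)}u_1+P_2^{(-1)}u_2=0$ identically, so membership in $(t-\theta)^n\oK[t]$ for \emph{some} $\oK[t]$-tuple never forces an $\mathbb{F}_q[t]$-relation among the $\bv_{u_i}$. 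The whole difficulty is to replace the ABP-produced $P_i\in\oK[t]$ by the canonical lifts $c_i(t)\in\mathbb{F}_q[t]$ of the coefficients $c_i\in A$, and your sketch (``iterate the $\sigma$-action and reorganize'') does not accomplish this descent. The paper sidesteps the problem by never forming the $(\ell+1)$-dimensional system: it bakes the $\mathbb{F}_q[t]$-coefficients in from the start, setting $\mathcal{R}=a_0(t)+\sum a_i(t)\mathscr{L}^{\circ}_{u_i,n}$ and $\mathcal{S}=\sum a_i(t)(-1)^nu_i/(t-\theta)^n$ and applying ABP to the rank-two vector $(\Omega^n,\Omega^n\mathcal{R})^{\tr}$. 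The conclusion there is much stronger than a polar condition: $\mathcal{R}$ itself equals a rational function $-Q\in\oK(t)$, so $Q^{(-1)}-Q=-\mathcal{S}$, and \cite[Prop.~2.2.1]{CPY19} clears denominators, giving $b\in\mathbb{F}_q[t]\setminus\{0\}$ with $bQ\in\oK[t]$; this says precisely that $b\cdot\sum a_i(t)[N_{(-1)^nu_i}]$ is the trivial extension, i.e.\ $\sum[ba_i(t)]\bv_{u_i}=0$ in $\mathcal{V}_n$. To salvage your route you would have to fold the coefficients $a_i(t)$ into the difference system before invoking ABP; as written, the key step of Part (1) is missing.
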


    \begin{proof}
        To prove the first assertion, it suffices to show that any $K$-linear relation among $1,\Li_{\mathcal{K},n}^\circ(u_1),\dots,\Li_{\mathcal{K},n}^\circ(u_\ell)$ can be lifted to an $\mathbb{F}_q[t]$-linear relation among $\bv_{u_1},\dots,\bv_{u_\ell}$ in $\mathcal{V}_n$. Without loss of generality, assume that there are $a_0,a_1,\dots,a_\ell\in A$, not all zero, so that
        \[
            a_0+a_1\Li_{\mathcal{K},n}^\circ(u_1)+\cdots+a_\ell\Li_{\mathcal{K},n}^\circ(u_\ell)=0\in\mathbb{C}_\infty.
        \]
        For each $1\leq i\leq\ell$, we have
        \[
            \mathscr{L}_{u_i,n}^\circ\in\overset{\rightarrow}{\mathscr{L}_{u_i,n}}=\wp^{-1}\big(\frac{(-1)^nu_i}{(t-\theta)^n}\big)\in\TT_\theta
        \]
        so that $\mathscr{L}_{u_i,n}^\circ\mid_{t=\theta}=\Li_{\mathcal{K},n}^\circ(u_i)$. It follows that
        \[
            \bigg(a_0(t)+a_1(t)\mathscr{L}_{u_1,n}^\circ+\cdots+a_\ell(t)\mathscr{L}_{u_\ell,n}^\circ\bigg)\mid_{t=\theta}=0.
        \]
        We set
        \[
            \mathcal{R}:=a_0(t)+a_1(t)\mathscr{L}_{u_1,n}^\circ+\cdots+a_\ell(t)\mathscr{L}_{u_\ell,n}^\circ\in\TT_\theta
        \]
        and
        \[
            \mathcal{S}:=a_1(t)\frac{(-1)^nu_1}{(t-\theta)^n}+\cdots+a_\ell(t)\frac{(-1)^nu_\ell}{(t-\theta)^n}\in\oK(t).
        \]
        Using the relation $\mathcal{R}^{(-1)}-\mathcal{R}=\mathcal{S}$, one checks directly that
        \[
            \begin{pmatrix}
                \Omega^n \\
                \Omega^n\mathcal{R}
            \end{pmatrix}^{(-1)}=\begin{pmatrix}
                (t-\theta)^n & \\
                (t-\theta)^n\mathcal{S} & (t-\theta)^n
            \end{pmatrix}\begin{pmatrix}
                \Omega^n \\
                \Omega^n\mathcal{R}
            \end{pmatrix}.
        \]
        Since
        \[
            \begin{pmatrix}
                0 & 1
            \end{pmatrix}\begin{pmatrix}
                \Omega^n\\
                \Omega^n\mathcal{R}
            \end{pmatrix}\mid_{t=\theta}=0,
        \]
        by \cite[Thm.~3.1.1]{ABP04}, there exist $P_1,P_2\in\oK[t]$ with $P_1(\theta)=0$ and $P_2(\theta)=1$, such that
        \[
            \begin{pmatrix}
                P_1 & P_2
            \end{pmatrix}\begin{pmatrix}
                \Omega^n\\
                \Omega^n\mathcal{R}
            \end{pmatrix}=0.
        \]
        Since $P_2(\theta)=1$, we have $P_2\neq 0$. It follows that we can define $Q:=P_1/P_2\in\oK(t)$ and 
        \[
            \begin{pmatrix}
                Q & 1
            \end{pmatrix}\begin{pmatrix}
                \Omega^n\\
                \Omega^n\mathcal{R}
            \end{pmatrix}=0.
        \]
        
        On the one hand,
        \[
            \bigg(\begin{pmatrix}
                Q & 1
            \end{pmatrix}\begin{pmatrix}
                \Omega^n\\
                \Omega^n\mathcal{R}
            \end{pmatrix}\bigg)^{(-1)}=\begin{pmatrix}
                Q^{(-1)} & 1
            \end{pmatrix}\begin{pmatrix}
                (t-\theta)^n & \\
                (t-\theta)^n\mathcal{S} & (t-\theta)^n
            \end{pmatrix}\begin{pmatrix}
                \Omega^n\\
                \Omega^n\mathcal{R}
            \end{pmatrix}=0.
        \]
        On the other hand,
        \[
            (t-\theta)^n\begin{pmatrix}
                Q & 1
            \end{pmatrix}\begin{pmatrix}
                \Omega^n\\
                \Omega^n\mathcal{R}
            \end{pmatrix}=\begin{pmatrix}
                (t-\theta)^nQ & (t-\theta)^n            \end{pmatrix}\begin{pmatrix}
                \Omega^n\\
                \Omega^n\mathcal{R}
            \end{pmatrix}=0.
        \]
        By taking the difference of the above two equations, one deduces that
        \[
            \bigg(\begin{pmatrix}
                Q^{(-1)}(t-\theta)^n+(t-\theta)^n\mathcal{S}-(t-\theta)^nQ & 0
            \end{pmatrix}\bigg)\begin{pmatrix}
                \Omega^n\\
                \Omega^n\mathcal{R}
            \end{pmatrix}=0
        \]
        Since $\Omega^n\neq 0$, we must have
        \[
            Q^{(-1)}(t-\theta)^n+(t-\theta)^n\mathcal{S}-(t-\theta)^nQ=0.
        \]

        Let $N_{\mathcal{S}}$ be the Frobenius module defined by
        \[
            \Phi_{\mathcal{S}}:=\begin{pmatrix}
                (t-\theta)^n & \\
                (t-\theta)^n\mathcal{S} & (t-\theta)^n
            \end{pmatrix}\in\Mat_{2}(\oK[t]).
        \]
        Then $Q\in\oK(t)$ induces a $\oK(t)[\sigma]$-module homomorphism between $N_{\mathcal{S}}\otimes_{\oK[t]}\oK(t)$ and $\big(M_{C^{\otimes n}}\oplus M_{C^{\otimes n}}\big)\otimes_{\oK[t]}\oK(t)$ since we have the relation
        \[
            \begin{pmatrix}
                1 & \\
                Q & 1
            \end{pmatrix}^{(-1)}\begin{pmatrix}
                (t-\theta)^n & \\
                (t-\theta)^n\mathcal{S} & (t-\theta)^n
            \end{pmatrix}=\begin{pmatrix}
                (t-\theta)^n & \\
                 & (t-\theta)^n
            \end{pmatrix}\begin{pmatrix}
                1 & \\
                Q & 1
            \end{pmatrix}.
        \]
        It follows from \cite[Prop.~2.2.1]{CPY19} that there is $b\in\mathbb{F}_q[t]$ so that $bQ\in\oK[t]$. In particular, it shows that $b\cdot[N_\mathcal{S}]$ represents the trivial class in $\Ext^1_{\mathscr{F}}(M_{C^{\otimes n}},M_{C^{\otimes n}})$. By Lemma~\ref{Lem:TMod}, we conclude that
        \[
            [ba_1(t)]\bv_{u_1}+\cdots+[ba_\ell(t)]\bv_{u_\ell}=0\in\mathcal{V}_n,
        \]
        which implies the first inequality.

        To prove the second assertion, it suffices to show that any $\mathbb{F}_q[t]$-linear relation among algebraic points $\bv_{u_1},\dots,\bv_{u_{\ell}}\in\mathcal{V}_n$ can be lifted to an $\oK$-linear relation among
        \[
            1,\Li_{\mathcal{K},n}^\circ(u_1),\dots,\Li_{\mathcal{K},n}^\circ(u_\ell)\in\mathbb{C}_\infty.
        \]
        Without loss of generality, assume that there are $c_1,\dots,c_\ell\in\mathbb{F}_q[t]$, not all zero, so that
        \[
            [c_1]\bv_{u_1}+\cdots+[c_\ell]\bv_{u_\ell}=0\in\mathcal{V}_n.
        \]
        By Lemma~\ref{Lem:TMod}, the Frobenius module $N$ defined by
        \[
            \Phi:=\begin{pmatrix}
                (t-\theta)^n & \\
                c_1(-1)^nu_1+\cdots+c_\ell(-1)^nu_{\ell} & (t-\theta)^n
            \end{pmatrix}\in\Mat_{2}(\oK[t])
        \]
        represents the trivial class in $\Ext^1_{\mathscr{F}}(M_{C^{\otimes n}},M_{C^{\otimes n}})$. By Corollary~\ref{Cor:Same_Class_Relation} and the $\mathbb{F}_q[t]$-linearity of $\overset{\rightarrow}{\mathscr{L}_{-,n}}$, there is an explicitly constructed $B\in\oK[t]$ so that
        \[
            c_1\overset{\rightarrow}{\mathscr{L}_{u_1,n}}+\cdots+c_\ell\overset{\rightarrow}{\mathscr{L}_{u_\ell,n}}+B=0\in\TT_\theta/\mathbb{F}_q[t].
        \]
        In particular, there is $B'\in\oK[t]$ so that
        \[
            c_1\mathscr{L}_{u_1,n}^\circ+\cdots+c_\ell\mathscr{L}_{u_\ell,n}^\circ+B'=0\in\TT_\theta.
        \]
        By specializing at $t=\theta$, we deduce that
        \[
            c_1(\theta)\Li_{\mathcal{K},n}^\circ(u_1)+\cdots+c_\ell\Li_{\mathcal{K},n}^\circ(u_\ell)+B'(\theta)=0\in\mathbb{C}_\infty.
        \]
        This gives the desired $\oK$-linear relation which completes the proof.
    \end{proof}

    \begin{example}\label{Ex:Independence}
        Let $n=2$, $u_1=1$, and $u_2=\theta$. Then $u_1$ and $u_2$ are $K$-linearly dependent. Hence, \cite[Thm.~4.9]{Har22} (see also \cite[Rem.~4.10]{Har22}) has no conclusion in this situation. However, the existence of $a,b\in\mathbb{F}_q[t]$ so that
        \[
            [a]_2\begin{pmatrix}
                0\\
                1
            \end{pmatrix}+[b]_2\begin{pmatrix}
                0\\
                \theta
            \end{pmatrix}=0\in\mathcal{V}_2
        \]
        is equivalent to
        \[
            \begin{cases}
                a(\theta)'+b(\theta)'\theta=0\\
                a(\theta)+b(\theta)\theta=0
            \end{cases},
        \]
        where $a(\theta)'$ and $b(\theta)'$ refer to the derivative of $a(\theta)$ and $b(\theta)$ respectively. One verifies easily that the only possible solution is $a=b=0$. Hence
        \[
            \rank_{\mathbb{F}_q[t]}\mathrm{Span}_{\mathbb{F}_q[t]}\{(0,1)^\tr,(0,\theta)^\tr\}=2.
        \]
        By Theorem~\ref{Thm:Linear_Relations}, we conclude that
        \[
            \dim_K\mathrm{Span}_K\{1,\Li_{\mathcal{K},2}^\circ(1),\Li_{\mathcal{K},2}^\circ(\theta)\}=3.
        \]
        Moreover, since the original power series expression $\Li_{\mathcal{K},2}(z)$ converges at both $1$ and $\theta$, we further conclude that any non-trivial $K$-linear combination of $\Li_{\mathcal{K},2}(1)$ and $\Li_{\mathcal{K},2}(\theta)$ is irrational in the following sense. For any $c_1,c_2\in K$, not all zero, we have
        \[
            c_1\Li_{\mathcal{K},2}(1)+c_2\Li_{\mathcal{K},2}(\theta)\not\in K.
        \]
    \end{example}


\begin{thebibliography}{99}
\bibitem[And86]{And86}
G.\  W.\  Anderson, \textit{$t$-motives}, Duke Math. J. \textbf{53} (1986), no. 2, 457--502.

\bibitem[ABP04]{ABP04}
G. W. Anderson, W. D. Brownawell and M. A. Papanikolas, \textit{Determination of the algebraic relations among special $\Gamma$-values in positive characteristic}, Ann. of Math. (2) \textbf{160} (2004), no. 1, 237--313.

\bibitem[AT90]{AT90}
G.\ W.\ Anderson and D.\ S.\ Thakur, \textit{Tensor powers of the Carlitz module and zeta values}, Ann. of Math. (2) \textbf{132} (1990), no. 1, 159--191.




\bibitem[Cha14]{Cha14}
C.-Y.\ Chang, \textit{Linear independence of monomials of multizeta values in positive characteristic}, Compos. Math. \textbf{150} (2014), 1789-1808.

\bibitem[Cha16]{Cha16}
C.-Y.\ Chang, \textit{Linear relations among double zeta values in positive characteristic}, 
Camb. J. Math. \textbf{4} (2016), no. 3, 289-331. 

\bibitem[CPY19]{CPY19}
C.-Y.\ Chang, M.\ A.\ Papanikolas and J.\ Yu, \textit{An effective criterion for Eulerian multizeta values in positive characteristic},   J. Eur. Math. Soc. (JEMS) \textbf{21} (2019), no. 2, 405-440.

\bibitem[Che24]{Che24}
Y.-T. Chen, \textit{On Furusho’s analytic continuation of Drinfeld logarithms}, Math. Z. 307 (2024), no. 48.

\bibitem[CH21]{CH21} Y.-T. Chen and R. Harada, {\it On lower bounds of the dimension of multizeta values in positive characteristic}, Doc. Math. (26), 537--559 (2021).

\bibitem[Fur22]{Fur22} H.\ Furusho, \textit{Analytic continuation of multiple polylogarithms in positive characteristic}. Tunisian. J. Math 4-3 (2022), 559-586.

\bibitem[Gaz24]{Gaz24}
Q.\ Gazda, \textit{On the Integral Part of $A$-Motivic Cohomology}, Compos. Math. \textbf{160} (2024), 1715–1783. 

\bibitem[GM22]{GM22}
Q.\ Gazda and A.\ Maurischat, \textit{Carlitz twists: their motivic cohomology, regulators, zeta values and polylogarithms}, Preprint (2022). arXiv:2212.02972


\bibitem[Har22]{Har22} R.\ Harada, \textit{$t$-motivic interpretations for special values of Thakur hypergeometric functions and Kochubei multiple polylogarithms}, to appear in Doc. Math.

\bibitem[Has22]{Has22} T. Hasegawa, \textit{Logarithmic-type and exponential-type hypergeometric functions for function fields}, J. Number Theory 233 (2022), 87–111


\bibitem[Koc05]{Koc05} A. N. Kochubei, \textit{Polylogarithms and a zeta function for finite places of a function field}, Ultrametric functional analysis, 157–167, Contemp. Math., 384, Amer. Math. Soc., Providence, RI, 2005.

\bibitem[Pap08]{Pap08}
M. A. Papanikolas, \textit{Tannakian duality for Anderson–Drinfeld motives and algebraic independence of Carlitz logarithms}, Invent. Math. \textbf{171} (2008), 123–174.

\bibitem[PR03]{PR03}
M. A. Papanikolas and N. Ramachandran, \textit{A Weil-Barsotti formula for Drinfeld modules}, J. Number Theory \textbf{98}, 407–431 (2003).

\bibitem[Tae20]{Tae20} L. Taelman, \textit{1-t-motifs}. in t-motives: Hodge structures, transcendence and other motivic aspects, EMS Series of Congress Reports (European Mathematical Society, Zürich, 2020), 417–439.

\bibitem[Tha95]{Tha95} D. S. Thakur, \textit{Hypergeometric functions for function fields}. Finite Fields Appl. 1 (1995), no. 2, 219–231

\bibitem[Wal02]{Wal02} M. Waldschmidt, \textit{Multiple polylogarithms: an introduction} in Number theory and discrete mathematics, Chandigarh, 2000, Trends in Mathematics (Birkh¨auser, Basel, 2002), 1–12.

\bibitem [Zha07]{Zha07} J.\ Zhao, \textit{Analytic continuation of multiple polylogarithms}, Analysis Mathematica. 33, 301-323 (2007).


\end{thebibliography}
\end{document}